\documentclass{amsproc}
\usepackage{fullpage}
\usepackage{amssymb}
\usepackage{amsmath}
\usepackage{amsthm}
\usepackage{amsfonts}
\usepackage{dsfont}
\usepackage{mathrsfs}
\usepackage{graphics}
\usepackage{graphicx}
\usepackage{url}
\usepackage{hyperref}
\usepackage{microtype}
\usepackage{enumitem}
\usepackage{mathrsfs}
\DeclareSymbolFontAlphabet{\mathrsfs}{rsfs}
\usepackage[mathscr]{euscript}
\usepackage{epsfig}

%\usepackage[mathscr]{eucal}

%\usepackage[backend=biber,texencoding=utf8,bibencoding=utf8,style=phys]{biblatex}
%\bibliography{myrefs.bib}

%\usepackage[small,nohug,heads=littlevee]{diagrams}
%\diagramstyle[labelstyle=\scriptstyle]

\newcommand{\cone}{\mathbb{C}}
\newcommand{\cstar}{\mathbb{C}^{\times}}
\newcommand{\cpone}{\mathbb{P}^1}

\newcommand{\zed}{\mathbb{Z}}

\newcommand{\xstar}{\mathrsfs{X}_k^*}

\newcommand{\ccwarrow}{\text{\Large$\curvearrowleft$}}

\newtheorem{theorem}{Theorem}[section]
\newtheorem{lemma}[theorem]{Lemma}

\theoremstyle{definition}

\newtheorem{example}[theorem]{Example}

\theoremstyle{remark}

\numberwithin{equation}{section}

\newtheoremstyle{citing}{}{}{\itshape}{}{\bfseries}{.}{ }{\thmnote{#3}}
\theoremstyle{citing}

\begin{document}
% \title[short text for running head]{full title}
\title{A construction of hyperk\"{a}hler metrics through Riemann-Hilbert problems II}

%    Only \author and \address are required; other information is
%    optional.  Remove any unused author tags.

%    author one information
% \author[short version for running head]{name for top of paper}
\author{C. Garza}
\address{Department of Mathematics, IUPUI, Indianapolis, USA}
\curraddr{}
\email{cegarza@iu.edu}
\thanks{}

%    author two information
%\author{}
%\address{}
%\curraddr{}
%\email{}
%\thanks{}

\subjclass[2010]{Primary }
%    The 2010 edition of the Mathematics Subject Classification is
%    now available.  If you are citing a classification from the
%    new scheme, use the following input coding instead.
%\subjclass[2010]{Primary }

\date{}

\begin{abstract}
We develop the theory of Riemann-Hilbert problems necessary for the results in \cite{theory}. In particular, we obtain solutions for a family of non-linear Riemann-Hilbert problems through classical contraction principles and saddle-point estimates. We use compactness arguments to obtain the required smoothness property on solutions. We also consider limit cases of these Riemann-Hilbert problems where the jump function develops discontinuities of the first kind together with zeroes of a specific order at isolated points in the contour. Solutions through Cauchy integrals are still possible and they have at worst a branch singularity at points where the jump function is discontinuous and a zero for points where the jump vanishes.
\end{abstract}

\maketitle

\section{Introduction}\label{intr}

This article presents the analytic results needed in \cite{theory}. As stated in said article, in order to construct complete hyperk\"{a}hler metrics $g$ on a special case of complex integrable systems (where moduli spaces of Higgs bundles constitute the prime example of this), one must obtain solutions to a particular infinite-dimensional, nonlinear family of Riemann-Hilbert problems. The analytical methods used to obtain these solutions and the smoothness results can be studied separately from the geometric motivations and so we present them in this article.

For limiting values of the parameter space, the Riemann-Hilbert problem degenerates in the sense that discontinuities appear in the jump function $G(\zeta)$ at $\zeta = 0$ and $\zeta = \infty$ in the contour $\Gamma$. Moreover, $G(\zeta)$ may vanish at isolated pairs of points in $\Gamma$. We study the behavior of the solutions to this boundary value problem near such singularities and we obtain their general form, proving that these functions do not develop singularities even in the presence of these pathologies, thus proving the existence of the hyperk\"{a}hler metrics in \cite{theory}.

The paper is organized as follows:

In Section \ref{statement} we state the Riemann-Hilbert problems to be considered. As shown in \cite{theory}, this arises from certain complex integrable systems satisfying a set of axioms motivated by the theory of moduli spaces of Higgs bundles, but we shall not be concerned about the geometric aspects in this paper.

In Section \ref{solutions} we solve the Riemann-Hilbert problem by iterations running estimates based on saddle-point analysis. Under the right Banach space, these estimates show that we have a contraction, proving that solutions exist and are unique. We then apply the Arzela-Ascoli theorem and uniform estimates to show that the solutions are smooth with respect to the parameter space. 

In Section \ref{special} we consider the special case when the parameter $a$ approaches 0 yielding a Riemann-Hilbert problem whose jump function has discontinuities and zeroes along the contour. We apply Cauchy integral techniques to obtain the behavior of the solutions near the points on the contour with these singularities. We show that a discontinuity of the jump function induces a factor $\zeta^\eta$ in the solutions, where $\eta$ is determined by the discontinuities of the jump function $G$. A zero of order $k$ at $\zeta_0$ induces a factor $(\zeta - \zeta_0)^k$ on the left-side part of the solutions. The nature of these solutions are exploited in \cite{theory} to reconstruct a holomorphic symplectic form $\varpi(\zeta)$ and, ultimately, a hyperk\"{a}hler metric.

\textbf{Acknowledgment:} The author would like to thank Professor Alexander Its for many illuminating conversations that greatly improved the manuscript.

\section{Formulation of the Riemann-Hilbert problem}\label{statement}

\subsection{Monodromy data}

We state the monodromy data we will use in this paper. For a more geometric description of the assumptions we make, see \cite{theory}.

Since we will only consider the manifolds $\mathcal{M}$ in \cite{theory} from a local point of view, we can consider it as a trivial torus fibration. With that in mind, here are the key ingredients we need in order to define our Riemann-Hilbert problem:

\begin{enumerate}[label=\textnormal{(\arabic*)}]
\item A neighborhood $U$ of $0$ in $\cone$ with coordinate $a$. On $U$ we have a trivial torus fibration $U \times T^2 := U \times (S^1)^2$ with $\theta_1, \theta_2$ the torus coordinates.

\item $\Gamma \cong \zed^2$ is a lattice equipped with an antisymmetric integer valued pairing $\left\langle , \right\rangle$. We also assume we can choose primitive elements $\gamma_1, \gamma_2$ in $\Gamma$ forming a basis for the lattice and such that $\left\langle \gamma_1, \gamma_2\right\rangle = 1$.

\item A homomorphism $Z$ from $\Gamma$ to the space of holomorphic functions on $U$.

\item A function $\Omega : \Gamma \to \zed$ such that $\Omega(\gamma) = \Omega(-\gamma), \gamma \in \Gamma$ and such that, for some $K > 0$,
\begin{equation}\label{support}
\frac{|Z_\gamma|}{\left\| \gamma \right\|} > K
\end{equation}
for a positive definite norm on $\Gamma$ and for all $\gamma$ for which $\Omega(\gamma) \neq 0$.\label{omega}
\end{enumerate}

For the first part of this paper we work with the extra assumption
\begin{enumerate}
\item[(5)] $Z_{\gamma_1}(a), Z_{\gamma_2}(a) \neq 0$ for any $a$ in $U$.
\end{enumerate}
Later in this paper we will relax this condition.

Observe that the torus coordinates $\theta_1, \theta_2$ induce a homomorphism $\theta$ from $\Gamma$ to the space of functions on $T^2$ if we assign $\gamma_{k} \mapsto \theta_{k}, k = 1, 2$. We denote by $\theta_\gamma, \gamma \in \Gamma$ the result of this map.
  
We consider a different complex plane $\cone$ with coordinate $\zeta$. Let $R > 0$ be an extra real parameter that we consider. We define the ``semiflat'' functions $\mathcal{X}_{\gamma} : U \times T^2 \times \cstar \to \cstar$ for any $\gamma \in \Gamma$ as
\begin{equation}\label{semiflat}
\mathcal{X}^\text{sf}_\gamma (a, \theta_1, \theta_2, \zeta) = \exp\left( \pi R \frac{Z_{\gamma}(a)}{\zeta} + i\theta_\gamma + \pi R \zeta \overline{Z_\gamma(a)}\right) 
\end{equation}
As in the case of the map $\theta$, it suffices to define $\mathcal{X}^\text{sf}_{\gamma_1}$ and $\mathcal{X}^\text{sf}_{\gamma_2}$.

For each $a \in U$ and $\gamma \in \Gamma$ such that $\Omega(\gamma) \neq 0$, the function $Z_\gamma$ defines a ray $\ell_\gamma(a)$ in $\cone$ given by
\[ \ell_\gamma(a) = \{ \zeta \in \cone : \zeta = -t Z_\gamma(a), t > 0 \} \]

Given a pair of functions $\mathcal{X}_k :  U \times T^2 \times \cstar \to \cone, k = 1, 2$, we can extend this with the basis $\{\gamma_1, \gamma_2\}$ as before to a collection of functions $\mathcal{X}_\gamma, \gamma \in \Gamma$. Each element $\gamma$ in the lattice also defines a transformation $\mathcal{K}_\gamma$ for these functions in the form
\[ \mathcal{K}_\gamma \mathcal{X}_{\gamma'} = \mathcal{X}_{\gamma'} (1 - \mathcal{X}_{\gamma})^{\left\langle \gamma', \gamma \right\rangle} \]

For each ray $\ell$ from 0 to $\infty$ in $\cone$ we can define a transformation
\begin{equation}\label{stkfac}
S_\ell = \prod_{\gamma : \ell_\gamma(u) = \ell} \mathcal{K}_\gamma^{\Omega(\gamma)}
\end{equation}
Observe that all the $\gamma$'s involved in this product are multiples of each other, so the $\mathcal{K}_\gamma$ commute and the order for the product is irrelevant.

We can now state the main type of Riemann-Hilbert problem we consider in this paper. We seek to obtain two functions $\mathcal{X}_k :  U \times T^2 \times \cstar \to \cstar, k = 1, 2$ with the following properties:
\begin{enumerate}
	\item Each $\mathcal{X}_k$ depends piecewise holomorphically on $\zeta$, with discontinuities only at the rays $\ell_\gamma(a)$ for which $\Omega(\gamma) \neq 0$. The functions are smooth on $U \times T^2$.
	\item The limits $\mathcal{X}_k^{\pm}$ as $\zeta$ approaches any ray $\ell$ from both sides exist and are related by
	\begin{equation}\label{invjmp}
	\mathcal{X}^+_k = S_\ell^{-1} \circ \mathcal{X}^-_k
	\end{equation}
	\item $\mathcal{X}$ obeys the reality condition
	\[ \overline{\mathcal{X}_{-\gamma}(-1/\overline{\zeta})} = \mathcal{X}_\gamma(\zeta) \]
	\item For any $\gamma \in \Gamma$, $\lim_{\zeta \to 0} \mathcal{X}_\gamma(\zeta) / \mathcal{X}^{\text{sf}}_\gamma(\zeta)$ exists and is real.
\end{enumerate}

\subsection{Isomonodromic Deformation}

It will be convenient for the geometric applications to move the rays to a contour that is independent of $a$. Even though the rays $\ell_\gamma$ defining the contour for the Riemann-Hilbert problem above depend on the parameter $a$, we can assume the open set $U \subset \cone$ is small enough so that there is a pair of rays $r, -r$ such that for all $a \in U$, half of the rays  lie inside the half-plane $\mathbb{H}_r$ of vectors making an acute angle with $r$; and the other half of the rays lie in $\mathbb{H}_{-r}$. We call such rays \textit{admissible rays}. We are allowing the case that $r$ is one of the rays $\ell_\gamma$, as long as it satisfies the above condition.
\begin{figure}[htbp]
	\centering
		\includegraphics[width=0.40\textwidth]{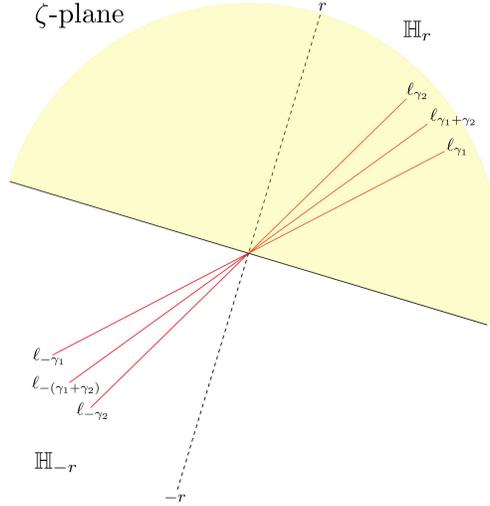}
	\caption{Admissible rays as the new contours for Riemann-Hilbert problems}
	\label{3rays}
\end{figure}
For $\gamma \in \Gamma$, we define $\gamma > 0$ (resp. $\gamma < 0$) as $\ell_\gamma \in \mathbb{H}_r$ (resp. $\ell_\gamma \in \mathbb{H}_{-r}$). Our Riemann-Hilbert problem will have only two anti-Stokes rays, namely $r$ and $-r$.  In this case, the Stokes factors are the concatenation of all the Stokes factors $S^{-1}_\ell$ in \eqref{stkfac} in the counterclockwise direction:
\begin{align*}
S_r & = \prod^\ccwarrow_{\gamma > 0}{\mathcal{K}^{\Omega(\gamma; a)}_\gamma}\\
S_{-r} & = \prod^\ccwarrow_{\gamma < 0}{\mathcal{K}^{\Omega(\gamma; a)}_\gamma}
\end{align*}
Thus, we reformulate the Riemann-Hilbert problem in terms of two functions $\mathcal{Y}_k :  U \times T^2 \times \cstar \to \cstar, k = 1, 2$ with discontinuities at the admissible rays $r, -r$ by replacing condition \ref{invjmp} above with
\begin{equation}\label{newjumps} \begin{array}{rll}
	\mathcal{Y}^+_k & = S_r \circ \mathcal{Y}^-_k, & \qquad \text{along $r$} \\
	\mathcal{Y}^+_k & = S_{-r} \circ \mathcal{Y}^-_k, & \qquad \text{along $-r$}
	\end{array}
\end{equation}
The other conditions remain the same:
\begin{enumerate}
	\item  The functions $\mathcal{Y}_k$ are smooth on $U \times T^2$.\label{smoothprop}
	\item $\mathcal{Y}$ obeys the reality condition
	\[ \overline{\mathcal{Y}_{-\gamma}(-1/\overline{\zeta})} = \mathcal{Y}_\gamma(\zeta) \]\label{realt}
	\item For any $\gamma \in \Gamma$, $\lim_{\zeta \to 0} \mathcal{Y}_\gamma(\zeta) / \mathcal{X}^{\text{sf}}_\gamma(\zeta)$ exists and is real. \label{asympt}
\end{enumerate}

In the following section we will prove the main theorem of this paper:

\begin{theorem}\label{solut}
There exists a pair of functions $\mathcal{Y}_k :  U \times T^2 \times \cstar \to \cstar, k = 1, 2$ satisfying \eqref{newjumps} and conditions \eqref{smoothprop}, \eqref{realt}, \eqref{asympt}. These functions are unique up to multiplication by a real constant.
\end{theorem}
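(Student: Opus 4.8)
The plan is to recast the nonlinear Riemann–Hilbert problem \eqref{newjumps} as an integral equation and solve it by a contraction-mapping argument, following the Gaiotto–Moore–Neitzke strategy. First I would take logarithms: writing $\mathcal{Y}_\gamma = \mathcal{X}^{\text{sf}}_\gamma \cdot \exp(\varepsilon_\gamma)$, the multiplicative jumps \eqref{newjumps} become additive jumps for the functions $\varepsilon_\gamma$, and condition \eqref{asympt} becomes the requirement that $\varepsilon_\gamma \to 0$ (in fact a real constant, accounting for the stated non-uniqueness) as $\zeta \to 0$. Since the jump factors $S_{\pm r}$ are built from the $\mathcal{K}_\gamma^{\Omega(\gamma)}$, each $\log \mathcal{K}_\gamma \mathcal{Y}_{\gamma'}$ contributes a term proportional to $\log(1 - \mathcal{Y}_\gamma)$, so along $r$ and $-r$ one gets $\varepsilon_\gamma^+ - \varepsilon_\gamma^- = \langle \gamma', \gamma\rangle$-weighted sums of $\log(1 - \mathcal{X}^{\text{sf}}_\gamma e^{\varepsilon_\gamma})$. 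By the Plemelj–Sokhotski formula, a function with prescribed additive jump across $r \cup (-r)$ and the right decay at $0$ and $\infty$ is recovered by a Cauchy integral, so the problem is equivalent to the fixed-point equation
\begin{equation}\label{fixedpt}
\varepsilon_\gamma(\zeta) = \frac{1}{4\pi i} \sum_{\gamma'} \Omega(\gamma') \langle \gamma, \gamma'\rangle \int_{\ell_{\gamma'}} \frac{d\zeta'}{\zeta'} \frac{\zeta' + \zeta}{\zeta' - \zeta} \log\!\left(1 - \mathcal{X}^{\text{sf}}_{\gamma'}(\zeta') e^{\varepsilon_{\gamma'}(\zeta')}\right),
\end{equation}
where the kernel $\frac{\zeta' + \zeta}{\zeta'(\zeta' - \zeta)}$ is chosen so that the reality condition \eqref{realt} is automatically respected and the behavior at $0$ is correct.

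Next I would set up the Banach space. The key point is that along the ray $\ell_{\gamma'}$ the semiflat function $\mathcal{X}^{\text{sf}}_{\gamma'}(\zeta') = \exp(\pi R (Z_{\gamma'}/\zeta' + \zeta' \overline{Z_{\gamma'}}) + i\theta_{\gamma'})$ decays exponentially: on that ray $Z_{\gamma'}/\zeta' = -1/(t)$ is real and negative, so $|\mathcal{X}^{\text{sf}}_{\gamma'}| = \exp(-\pi R(|Z_{\gamma'}|/|\zeta'| + |Z_{\gamma'}||\zeta'|))$, which is $\le \exp(-2\pi R |Z_{\gamma'}|)$ and integrable against $d\zeta'/\zeta'$. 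The support condition \eqref{support}, $|Z_\gamma| > K\|\gamma\|$ whenever $\Omega(\gamma) \ne 0$, is exactly what makes the sum over $\gamma'$ converge and gives a uniform bound proportional to $e^{-2\pi R K}$, so for $R$ large (or $U$ small, where $|Z_{\gamma_i}|$ is controlled) the right-hand side of \eqref{fixedpt} is a contraction on a small ball in the sup-norm over $\zeta$, with parameters $(a,\theta) \in U \times T^2$ entering as harmless constants. A saddle-point/Laplace estimate on $\int \frac{\zeta'+\zeta}{\zeta'(\zeta'-\zeta)} e^{-\pi R(|Z|/|\zeta'| + |Z||\zeta'|)}\,d|\zeta'|$ gives the sharp decay rate and handles the apparent singularity as $\zeta \to \ell_{\gamma'}$. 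The contraction then yields existence and uniqueness of $\varepsilon_\gamma$, hence of $\mathcal{Y}_k$; the only freedom is the additive real constant in the kernel's behavior at $\zeta = 0$, i.e. multiplication of $\mathcal{Y}_k$ by a real constant, which gives the stated uniqueness clause.

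Finally, for smoothness on $U \times T^2$ (condition \eqref{smoothprop}): the fixed point depends on $(a,\theta)$ through the explicit, smooth dependence of $\mathcal{X}^{\text{sf}}_{\gamma'}$ and of the rays $\ell_{\gamma'}(a)$. I would differentiate \eqref{fixedpt} formally in the parameters, observe that the differentiated equation is again of the form $(\mathrm{Id} - L)\,\partial\varepsilon = (\text{known smooth data})$ with the same contraction operator $L$, so $\mathrm{Id} - L$ is invertible and $\partial\varepsilon$ exists and is bounded; iterating gives all derivatives, and Arzelà–Ascoli plus the uniform estimates upgrade this to genuine smoothness, including continuity up to the rays. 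One technical subtlety here is that the contour $\ell_{\gamma'}(a)$ moves with $a$; I would handle this either by a preliminary $a$-dependent rotation sending each ray to a fixed admissible ray $r$ or $-r$ (the isomonodromic reformulation already does most of this), or by deforming the contour within the region of analyticity of the integrand.

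I expect the main obstacle to be the smoothness argument rather than existence: controlling the $a$-derivatives uniformly requires that differentiating under the integral sign does not destroy the exponential decay — near the saddle point $|\zeta'| \sim 1$ a factor of $\partial_a Z_{\gamma'}$ times $\pi R$ appears, which is large, so one must check that the Gaussian decay from the saddle-point estimate still beats these polynomial-in-$R$ prefactors, and that the estimates are uniform as $(a,\theta)$ ranges over the (precompact) parameter domain. The reality condition \eqref{realt} should fall out directly from the symmetry $\overline{\mathcal{X}^{\text{sf}}_{-\gamma}(-1/\bar\zeta)} = \mathcal{X}^{\text{sf}}_\gamma(\zeta)$ of the semiflat functions together with the antisymmetry of $\langle\,,\,\rangle$ and the symmetry $\Omega(\gamma) = \Omega(-\gamma)$, once one checks the Cauchy kernel is compatible with $\zeta \mapsto -1/\bar\zeta$.
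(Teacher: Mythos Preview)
Your proposal is correct and follows essentially the same route as the paper: recast the problem as a Cauchy-integral fixed-point equation with the kernel $\frac{\zeta'+\zeta}{\zeta'(\zeta'-\zeta)}$, prove contraction via saddle-point estimates exploiting the exponential decay of $\mathcal{X}^{\text{sf}}_{\gamma'}$ along $\ell_{\gamma'}$ together with the support property, and then obtain smoothness from uniform derivative bounds plus Arzel\`a--Ascoli. The one place the paper is more explicit is uniqueness: rather than attributing it to the contraction, it argues classically that the ratio of two solutions has no jumps, is entire and bounded, hence constant, with the reality/asymptotic condition forcing that constant to be real.
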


\section{Solutions}\label{solutions}

We start working on a proof of Theorem \ref{solut}. As in the classical scalar Riemann-Hilbert problems, we obtain the solutions $\mathcal{Y}_k$ by solving the integral equation
\begin{equation}\label{logspm}
\mathcal{Y}_k(a,\zeta) = \mathcal{X}_{\gamma_k}^{\text{sf}}(a,\zeta) \exp \left( \frac{1}{4\pi i} \left\{ \int_r K(\zeta, \zeta') \log(S_r \mathcal{Y}_k) + \int_{-r} K(\zeta, \zeta') \log(S_{-r} \mathcal{Y}_k) \right\} \right), \quad k = 1, 2
\end{equation}
where we abbreviated  $\dfrac{d\zeta'}{\zeta'}\dfrac{\zeta'+\zeta}{\zeta'-\zeta}$ as $K(\zeta',\zeta)$. The dependence of $\mathcal{Y}_k$ on the torus coordinates $\theta_1, \theta_2$ has been omitted to simplify notation. We will write $\mathcal{Y}_\gamma$ to denote the function resulting from the (multiplicative) homomorphism from $\Gamma$ to nonzero functions on $U \times T^2 \times \cstar$ induced by $\mathcal{Y}_k, k = 1, 2$. 

It will be convenient to write
\begin{equation}\label{thet}
 \mathcal{Y}_\gamma(a, \zeta, \theta) = \mathcal{X}_\gamma^{\text{sf}}(a, \zeta, \Theta),
\end{equation}
for $\Theta_k : U \times  T^2 \times \cstar \to \cone, k = 1, 2$. We abuse notation and write $\theta$ for $(\theta_1, \theta_2)$, as we do with $\Theta$.

If we take the power series expansion of $\log(S_r \mathcal{Y}_k), \log(S_{-r} \mathcal{Y}_k)$ and decompose the terms into their respective components in each $\gamma \in \Gamma$, we can rewrite the integral equation \eqref{logspm} as
\begin{equation}\label{intsto}
\mathcal{Y}_\gamma(a,\zeta) = \mathcal{X}_\gamma^{\text{sf}}(a,\zeta)\exp \left( \frac{1}{4\pi i}\left\{  \sum_{\gamma' > 0} f^{\gamma'} \int_r K(\zeta,\zeta') \mathcal{Y}_{\gamma'}(a,\zeta') +  \sum_{\gamma' < 0} f^{\gamma'} \int_{-r} K(\zeta,\zeta') \mathcal{Y}_{\gamma'}(a,\zeta')\right\}\right)
\end{equation}
where 
\[ f^{\gamma'} = c_{\gamma'} \left\langle \gamma, \gamma' \right\rangle, \]
$c_{\gamma'}$ a rational constant obtained by power series expansion. 

\begin{example}[The Pentagon case]

As our main example of this families of Riemann-Hilbert problems, we have the Pentagon case, studied in more detail in \cite{theory}. Here the jump functions $S_r, S_{-r}$ are of the form
\begin{align}
\left. \begin{array}{ll}
          \mathcal{Y}_1 & \mapsto \mathcal{Y}_1(1-\mathcal{Y}_2)\\
          \mathcal{Y}_2 & \mapsto \mathcal{Y}_2(1-\mathcal{Y}_1(1-\mathcal{Y}_2))^{-1}
          \end{array}  \right\} & S_r \label{newj1}\\
\intertext{and, similarly}
\left. \begin{array}{ll}
          \mathcal{Y}_1 & \mapsto \mathcal{Y}_1(1-\mathcal{Y}^{-1}_2)^{-1}\\
          \mathcal{Y}_2 & \mapsto \mathcal{Y}_2(1-\mathcal{Y}^{-1}_1(1-\mathcal{Y}^{-1}_2))          
          \end{array} \right\} & S_{-r} \label{newj2}
\end{align}

If we expand $\log(S_{r}  \mathcal{Y}_k), k = 1, 2$ etc. we obtain
\begin{equation*}
   f^{i\gamma_{1}+j\gamma_2}= \left\{ \begin{array}{ll}
                      \dfrac{-1}{j} \left\langle \gamma, \gamma_{2} \right\rangle & \text{ if $i=0$}\\
                       \dfrac{ (-1)^{j}  }{i} \binom{|i|}{|j|} \left\langle \gamma, \gamma_{1} \right\rangle & \text{ if $0\leq j\leq i$ or $i \leq j \leq 0$}\\
                      0 & \text{ otherwise.}
                      \end{array} \right.
\end{equation*}
\end{example}

Back in the general case, our approach for a solution to \eqref{intsto} is to work with iterations. For $\nu \in \mathbb{N}$:
\begin{equation}\label{itery}
\mathcal{Y}^{(\nu+1)}_\gamma(a,\zeta) = \mathcal{X}_\gamma^{\text{sf}}(a,\zeta)\exp \left( \frac{1}{4\pi i}\left\{  \sum_{\gamma' > 0} f^{\gamma'} \int_r K(\zeta,\zeta') \mathcal{Y}^{(\nu)}_{\gamma'}(a,\zeta') +  \sum_{\gamma' < 0} f^{\gamma'} \int_{-r} K(\zeta,\zeta') \mathcal{Y}^{(\nu)}_{\gamma'}(a,\zeta')\right\}\right)
\end{equation}
Formula \eqref{itery} requires an explanation. Assuming $\mathcal{Y}^{(\nu-1)}_{\gamma'}, \gamma' \in \Gamma$ has been constructed, by definition, $\mathcal{Y}^{(\nu)}_{\gamma'}$ has jumps at $r$ and $-r$. By abuse of notation, $\mathcal{Y}^{(\nu)}_{\gamma'}$ in \eqref{itery} denotes the analytic continuation to the ray $r$ (resp. $-r$) along $\mathbb{H}_r$ (resp. $\mathbb{H}_{-r}$) in the case of the first (resp. second) integral.

By using \eqref{thet}, we can write \eqref{intsto} as an additive Riemann-Hilbert problem where we solve the integral equation
\begin{equation}\label{ineqs}
e^{i\Theta_\gamma} = e^{i\theta_\gamma}\exp \left( \frac{1}{4\pi i}\left\{ \sum_{\gamma' > 0} f^{\gamma'} \int_r K(\zeta,\zeta') \mathcal{X}^{\text{sf}}_{\gamma'}(a,\zeta',\Theta) + \sum_{\gamma' < 0} f^{\gamma'} \int_{-r} K(\zeta,\zeta') \mathcal{X}^{\text{sf}}_{\gamma'}(a,\zeta',\Theta)\right\}\right)
\end{equation}

As in \eqref{itery}, the solution of \eqref{ineqs} is obtained through iterations:
\begin{equation}\label{theta0}
\Theta^{(0)}(\zeta,\theta) = \theta,
\end{equation}
\begin{align}
e^{i\Theta_\gamma^{(\nu+1)}} = e^{i\theta_\gamma}\exp \left(\frac{1}{4\pi i}\left\{ \sum_{\gamma' > 0} f^{\gamma'} \int_r K(\zeta,\zeta') \mathcal{X}^{\text{sf}}_{\gamma'}(a,\zeta',\Theta^{(\nu)}) + \sum_{\gamma' < 0} f^{\gamma'} \int_{-r} K(\zeta,\zeta') \mathcal{X}^{\text{sf}}_{\gamma'}(a,\zeta',\Theta^{(\nu)})\right\}\right) \label{upsto}
\end{align}

We need to show that $\Theta^{(\nu)} = (\Theta_1^{(\nu)}, \Theta_2^{(\nu)})$ converges uniformly in $a$ to well defined functions $\Theta_k : U \times T^2 \times \cpone \to \cone, k = 1, 2$ with the right smooth properties on $a$ and $\zeta$. Define $\mathrsfs{X}$ as the completion of the space of bounded functions of the form $\Phi: U \times T^2 \times \cpone \to \cone^{2}$ that are smooth on $U \times T^2$ under the norm
\begin{equation}\label{normd}
\left\| \Phi \right\| =  \sup_{\zeta,\theta, a} \left\| \Phi(\zeta, \theta, a) \right\|_{\cone^{2}},
\end{equation}
where $\cone^{2}$ is assumed to have as norm the maximum of the Euclidean norm of its coordinates. Notice that we have not put any restriction on the functions $\Phi$ in the $\cpone$ slice, except that they must be bounded. Our strategy will be to solve the Riemann-Hilbert problem in  $\mathrsfs{X}$ and show that for sufficiently big (but finite) $R$, we can get uniform estimates on the iterations yielding such solutions and any derivative with respect to the parameters $a, \theta$. The Arzela-Ascoli theorem will give us that the solution $\Phi$ not only lies in $\mathrsfs{X}$, but it preserves all the smooth properties. The very nature of the integral equation will guarantee that its solution is piecewise holomorphic on $\zeta$, as desired.

We're assuming as in \cite{gaiotto} that $\Gamma$ has a positive definite norm satisfying the Cauchy-Schwarz property
\[ \left|\left\langle \gamma, \gamma' \right\rangle \right| \leq \left\|\gamma\right\| \left\|\gamma' \right\| \]
as well as the ``Support property'' \eqref{support}.
%\begin{equation}\label{supprt}
% \left\|\gamma\right\| < \text{const}|Z_\gamma|,
%\end{equation}
%for all $\gamma$ such that $\Omega(\gamma) \neq 0$.
For any $\Phi \in \mathrsfs{X}$, let $\Phi_k$ denote the composition of $\Phi$ with the $k$th projection $\pi_k : \cone^{2} \to \cone, k = 1, 2$. Instead of working with the full Banach space $\mathrsfs{X}$, let $\mathrsfs{X}^*$ be the collection of $\Phi \in \mathrsfs{X}$ in the closed ball
\begin{equation}\label{bounds}
\left\| \Phi - \theta \right\| \leq \epsilon,
\end{equation}
for an $\epsilon > 0$ so small that
\begin{equation}\label{condex}
\sup_{\zeta,\theta,a} \left|  e^{i\Phi_k}  \right| \leq 2, 
\end{equation}
for $k =  1, 2$. In particular, $\mathrsfs{X}^*$ is closed, hence complete. Note that by \eqref{condex}, if $\Phi \in \mathrsfs{X}^*$, then $e^{i\Phi} \in \mathrsfs{X}$. Furthermore, by \eqref{upsto}, the transformation in $\zeta$ is only as an integral transformation, so $\Theta^{(\nu)}$ is holomorphic in either of the half planes $\mathbb{H}_r$ or $\mathbb{H}_{-r}$. 

\subsection{Saddle-point Estimates}

We will prove the first of our uniform estimates on $\Theta^{(\nu)}$.

\begin{lemma}\label{nubst}
$\Theta^{(\nu)} \in \xstar$ for all $\nu$.
\end{lemma}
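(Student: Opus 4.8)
The plan is to argue by induction on $\nu$, the crucial point being that the estimate uses $\Theta^{(\nu)}$ only through the defining bounds \eqref{bounds}, \eqref{condex} of $\xstar$, so it will not deteriorate as $\nu$ grows. For $\nu=0$ one has $\Theta^{(0)}=\theta$, hence $\Theta^{(0)}-\theta=0$ and $|e^{i\theta_k}|=1\le 2$, so $\Theta^{(0)}\in\xstar$. Assuming $\Theta^{(\nu)}\in\xstar$, I first note that the right-hand side of \eqref{upsto} is holomorphic in $\zeta$ on each half-plane $\mathbb{H}_r,\mathbb{H}_{-r}$, bounded on $\cpone$, and smooth in $(a,\theta)$ (differentiation under the integral sign being legitimate thanks to the exponential decay established below); since $\epsilon$ was chosen in \eqref{condex} so that $\|\Phi-\theta\|\le\epsilon$ already forces $e^{i\Phi}\in\mathrsfs{X}$, it is enough to prove the single bound $\sup_{\zeta,\theta,a}\,|\Theta_k^{(\nu+1)}-\theta_k|\le\epsilon$ for $k=1,2$. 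Taking logarithms in \eqref{upsto} with $\gamma=\gamma_k$ and using $f^{\gamma'}=c_{\gamma'}\langle\gamma_k,\gamma'\rangle$, this is the inequality
\[
\frac{1}{4\pi}\Bigl|\sum_{\gamma'>0}c_{\gamma'}\langle\gamma_k,\gamma'\rangle\!\int_r\! K(\zeta,\zeta')\mathcal{X}^{\text{sf}}_{\gamma'}(a,\zeta',\Theta^{(\nu)})+\sum_{\gamma'<0}c_{\gamma'}\langle\gamma_k,\gamma'\rangle\!\int_{-r}\! K(\zeta,\zeta')\mathcal{X}^{\text{sf}}_{\gamma'}(a,\zeta',\Theta^{(\nu)})\Bigr|\le\epsilon .
\]

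The next step is a saddle-point estimate on each integral. Write $e^{i\phi}$ for the direction of $r$ and parametrize $\zeta'=\rho e^{i\phi}\in r$, $\rho>0$. For $\gamma'>0$ the condition $\ell_{\gamma'}\subset\mathbb{H}_r$ reads $\operatorname{Re}(e^{-i\phi}Z_{\gamma'}(a))<0$; after shrinking $U$ if necessary and invoking the support property \eqref{support} together with assumption (5), I will use that there are constants $c,c_1>0$, uniform in $a\in U$ and in the $\gamma'$ occurring in \eqref{upsto}, such that
\[
\operatorname{Re}\bigl(e^{-i\phi}Z_{\gamma'}(a)\bigr)\le -c\,|Z_{\gamma'}(a)|,\qquad |Z_{\gamma'}(a)|\ge c_1\,\|\gamma'\| ,
\]
and symmetrically along $-r$ for $\gamma'<0$ (the first inequality because $Z_{\gamma'}$, like the $Z_\gamma$ with $\gamma>0$ and $\Omega(\gamma)\ne0$ out of which it is built, lies in a fixed convex cone containing no line, so no cancellation can occur). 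Then \eqref{semiflat} gives
\[
\bigl|\mathcal{X}^{\text{sf}}_{\gamma'}(a,\zeta',\Theta^{(\nu)})\bigr|=\bigl|e^{i\Theta^{(\nu)}_{\gamma'}}\bigr|\exp\!\bigl(\pi R(\rho+\rho^{-1})\operatorname{Re}(e^{-i\phi}Z_{\gamma'})\bigr)\le e^{\beta\|\gamma'\|}\,e^{-\pi Rc\,|Z_{\gamma'}|(\rho+\rho^{-1})},
\]
where $|e^{i\Theta^{(\nu)}_{\gamma'}}|\le e^{\beta\|\gamma'\|}$ follows from \eqref{bounds}, \eqref{condex} and the homomorphism property; here and below $\beta$ is a fixed constant, independent of $R$, chosen large enough to also absorb the growth of $|c_{\gamma'}|$ and various polynomial factors in $\|\gamma'\|$. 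Using $\int_0^\infty e^{-A(\rho+\rho^{-1})}\tfrac{d\rho}{\rho}\le 3e^{-2A}$ for $A\ge1$, the boundedness of $|K(\zeta,\zeta')|$ for $\zeta$ away from $r$, and — for $\zeta$ on $r$ — the Plemelj formula, which controls the one-sided boundary values by $C^1$-norms of the (smooth, exponentially small) density at the cost of a polynomial factor $P(R)$, I obtain, uniformly in $\zeta,\theta,a$,
\[
\Bigl|\int_r K(\zeta,\zeta')\mathcal{X}^{\text{sf}}_{\gamma'}(a,\zeta',\Theta^{(\nu)})\Bigr|\le C\,P(R)\,e^{\beta\|\gamma'\|}\,e^{-2\pi Rcc_1\|\gamma'\|},
\]
and the analogous bound along $-r$.

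Finally I would sum over $\gamma'$. Cauchy--Schwarz gives $|\langle\gamma_k,\gamma'\rangle|\le\|\gamma_k\|\,\|\gamma'\|$, so
\[
\sup_{\zeta,\theta,a}\bigl|\Theta_k^{(\nu+1)}-\theta_k\bigr|\le C'\,\|\gamma_k\|\,P(R)\sum_{\gamma'\in\Gamma\setminus\{0\}}e^{-(2\pi Rcc_1-\beta)\|\gamma'\|},
\]
and the series converges as soon as $2\pi Rcc_1>\beta$ and tends to $0$ as $R\to\infty$. Fixing $R$ so large that the right-hand side is $\le\epsilon$ — a choice that does not depend on $\nu$ — yields $\|\Theta^{(\nu+1)}-\theta\|\le\epsilon$, hence $\Theta^{(\nu+1)}\in\xstar$, and the induction closes. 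The step I expect to be the main obstacle is securing the two uniformities underlying the saddle-point bound: that the estimate of $\int_r K\,\mathcal{X}^{\text{sf}}_{\gamma'}$ be uniform in $\zeta$ all the way up to the rays $r,-r$, where the Cauchy kernel is singular (this is where the Plemelj boundary-value analysis and the polynomial factor $P(R)$ enter), and that the lower bounds on $\operatorname{Re}(e^{-i\phi}Z_{\gamma'})$ and on $|Z_{\gamma'}|$ hold uniformly over $a\in U$ and over \emph{all} the $\gamma'$ appearing in \eqref{upsto}, not only those with $\Omega(\gamma')\ne0$ — which is exactly the role of admissibility of the contour and of assumption (5). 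Once these are in hand, the rest is bookkeeping.
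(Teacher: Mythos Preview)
Your plan is sound and would prove the lemma, but the key estimation step takes a genuinely different route from the paper's. You estimate directly on the fixed ray $r$, bounding $|\mathcal{X}^{\text{sf}}_{\gamma'}|\le e^{\beta\|\gamma'\|}e^{-\pi Rc|Z_{\gamma'}|(\rho+\rho^{-1})}$ and integrating. The paper instead deforms each integral from $r$ to the BPS ray $\ell_{\gamma'}$ by Cauchy's theorem---picking up, when $\zeta$ lies in the intervening sector, an explicit residue bounded by $2^{\|\gamma'\|}e^{-\text{const}\,R|Z_{\gamma'}|}$---and only then runs a genuine saddle-point expansion on $\ell_{\gamma'}$ (isolated as a separate Lemma~\ref{saddleresults}, which also handles the degenerate case $\zeta=\zeta_0$ by a symmetric path-splitting trick). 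The deformation buys the optimal decay $e^{-2\pi R|Z_{\gamma'}|}$ regardless of the angle between $\ell_{\gamma'}$ and $r$, whereas your constant $c=\inf_{\gamma'}|\cos(\arg Z_{\gamma'}-\phi)|$ can degenerate if BPS rays accumulate near $\partial\mathbb{H}_r$; it also localizes the kernel singularity to the single saddle $\zeta_0\in\ell_{\gamma'}$, so the paper needs no Plemelj analysis at all.

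The obstacle you flag is real and, in your route, needs more than you wrote. Bounding the boundary value via Plemelj requires a H\"older (or $C^1$) norm of the full density $\zeta'\mapsto\mathcal{X}^{\text{sf}}_{\gamma'}(a,\zeta',\Theta^{(\nu)}(\zeta'))$ along $r$, hence $\zeta'$-regularity of $\Theta^{(\nu)}$ that is \emph{uniform in $\nu$}; the inductive hypothesis $\Theta^{(\nu)}\in\xstar$ is a pure sup-norm condition and supplies none. One can close this (for instance by using that $\Theta^{(\nu)}$ continues analytically across $r$ into a fixed sector, together with the sup bound there and Cauchy's estimate on a disk of fixed radius), but it is precisely the extra bookkeeping the paper's contour deformation is designed to bypass.
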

\begin{proof}
We follow \cite{gaiotto}, using induction on  $\nu$. The statement is clearly true for $\nu = 0$ by \eqref{theta0}. Assuming $\Theta^{(\nu)} \in \mathrsfs{X}^*$, take the $\log$ in both sides of \eqref{upsto}:
\begin{equation}\label{logthet}
 \Theta^{(\nu+1)}_k - \theta_k = -\frac{1}{4\pi }\left\{ \sum_{\gamma' > 0} f^{\gamma'} \int_r K(\zeta,\zeta') \mathcal{X}^{\text{sf}}_{\gamma'}(a,\zeta',\Theta^{(\nu)}) + \sum_{\gamma' < 0} f^{\gamma'} \int_{-r} K(\zeta,\zeta') \mathcal{X}^{\text{sf}}_{\gamma'}(a,\zeta',\Theta^{(\nu)}) \right\}, \quad k = 1, 2
  \end{equation}
For general $\Phi \in \mathrsfs{X}^*$, $\Phi$ can be very badly behaved in the $\cpone$ slice, but by our inductive construction, $\Theta^{(\nu+1)}$ is even holomorphic in $\mathbb{H}_r$ and $\mathbb{H}_{-r}$. Consider the integral
\begin{equation}\label{integr}
\int_r K(\zeta,\zeta') \mathcal{X}^{\text{sf}}_{\gamma'}(a,\zeta',\Theta^{(\nu)})
\end{equation}
The function $\Theta^{(\nu)}$ can be analytically extended along the ray $r$ so that it is holomorphic on the sector $V$ bounded by $r$ and $\ell_{\gamma'}, \gamma' > 0$ (see Figure \ref{vsect}). By Cauchy's theorem, we can move \eqref{integr} to one along the ray $\ell_{\gamma'}$, possibly at the expense of a residue of the form

\begin{figure}[htbp]
	\centering
		\includegraphics[width=0.40\textwidth]{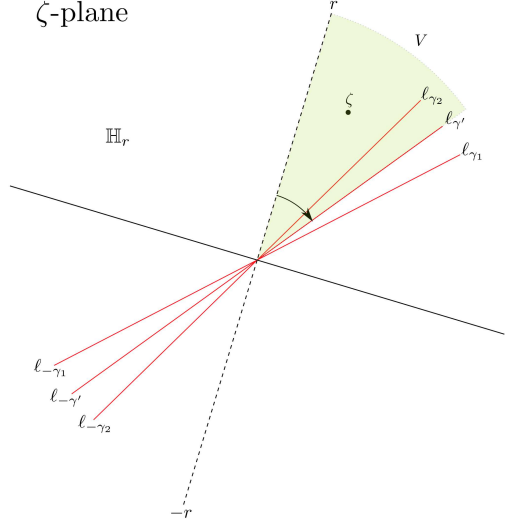}
	\caption{Translating the integral to the ray $\ell_{\gamma'}$}
	\label{vsect}
\end{figure}

\begin{equation}\label{resid}
4\pi i \exp\left[i\Theta_{\gamma'}^{(\nu)} + \pi R \left( \frac{Z_{\gamma'}}{\zeta} + \overline{Z_{\gamma'}}\zeta \right) \right]
\end{equation}
if $\zeta$ lies in $V$. This residue is in control. Indeed, by the induction hypothesis, $\left| e^{i \Theta_{\gamma'}^{(\nu)}} \right| < 2^{\left\| \gamma'\right\|}$, independent of $\nu$. Moreover, we pick a residue only if $\zeta$ lies in the sector $S$ bounded by the first and last $\ell_{\gamma_k}, \gamma_k \in \{\gamma_1, \gamma_{2}\}$ included in $\mathbb{H}_r$ traveling in the counterclockwise direction. This sector is strictly smaller than $\mathbb{H}_r$ (see Figure \ref{zetar}), so $\arg Z_{\gamma'} - \arg \zeta \in (-\pi,\pi)$ and, since $r$ makes an acute angle with all rays $\ell_{\gamma'}, \gamma'>0$:

\begin{figure}[htbp]
	\centering
		\includegraphics[width=0.40\textwidth]{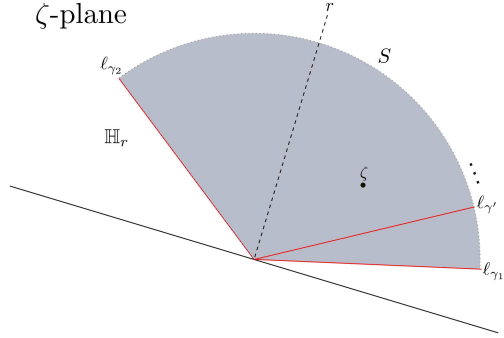}
	\caption{A residue appears only if $\zeta$ lies in $S$}
	\label{zetar}
\end{figure}

\[  |\arg Z_{\gamma'} - \arg \zeta| > \text{const} > \frac{\pi}{2} 
    \hspace{5 mm} \text{ for all $\gamma' > 0, \zeta \in S$.} \]
In particular,
\begin{equation}\label{cosine}
 \cos(\arg Z_{\gamma'} - \arg \zeta) < -\text{const} <0 \hspace{5 mm} \text{ for all $\gamma' > 0, \zeta \in S$.}
 \end{equation}
Using the fact that $\inf (|\zeta| + 1/|\zeta|) = 2$, the sum of residues of the form \eqref{resid} is bounded by:
\begin{equation}\label{resest}
 \sum_{\gamma' >0} \left| f^{\gamma'} \right| 2^{\left\|  
\gamma' \right\|} e^{-\text{const} R|Z_{\gamma'}|} \,
\end{equation}
Recall that $\left\| \gamma'\right\| < \text{const} |Z_{\gamma'}|$, so \eqref{resest} can be simplified to
\begin{equation}\label{bstest}
 \sum_{\gamma' >0} \left|f^{\gamma'} \right|  e^{(-\text{const} R + \delta)|Z_{\gamma'}|} \,
\end{equation}
for a constant $\delta$. We're assuming that $\Omega(\gamma')$ do not grow too quickly with $\gamma'$, by the support property \eqref{support}, so $\left|f^{\gamma'} \right|$ is dominated by the exponential term and the above sum can be made arbitrarily small if $R$ is big enough. This bound can be chosen to be independent of $\nu$, $\zeta$ and the basis element $\gamma_k$ (by choosing the maximum among the $\gamma_1, \gamma_{2}$). The exact same argument can be used to show that the residues of the integrals along $-r$ are in control. In fact, let $\epsilon > 0$ be given. Choose $R > 0$ so that the total sum of residues Res$(\zeta)$ is less than $\epsilon/2$.

Thus, we can assume the integrals are along $\ell_{\gamma'}$ and consider
\begin{equation*}%\label{intsadl}
\int_{\ell_{\gamma'}} K(\zeta,\zeta') \mathcal{X}^{\text{sf}}_{\gamma'}(a,\zeta',\Theta^{(\nu)})
\end{equation*}
The next step is to do a saddle point analysis and obtain the asymptotics for large $R$. Since this type of analysis will be of independent interest to us, we leave these results to a separate Lemma at the end of this section.

By \eqref{condex}, $\left| \exp \left( i \Theta^{(\nu)}_{\gamma'}(\zeta_0)\right) \right| \leq 2^{\left\| \gamma'\right\|}$.  Thus, by Lemma \ref{saddleresults}, for $\zeta$ away from the saddle $\zeta_0$, we can bound the contribution from the integral by
\begin{equation}\label{festm}
\text{const } \left| f^{\gamma'}\right| 2^{\left\| \gamma'\right\|} \frac{e^{-2\pi R|Z_{\gamma'}|}}{\sqrt{R |Z_{\gamma'}|}}
\end{equation}
if $R$ is big enough.

The case of $\zeta = \zeta_0$ is, by Lemma \ref{saddleresults}, as in \eqref{festm} except without the $\sqrt{R}$ term in the denominator. In any case, by \eqref{sdrest}, and since $\exp \left( i \Theta^{(\nu)}_{\gamma'}(\zeta_0)\right) \leq 2^{\left\|\gamma' \right\|}$ by \eqref{condex} and by \eqref{festm},

\begin{equation}\label{spest}
\left| \sum_{\gamma'} f^{\gamma'}\int_{\ell_{\gamma'}} K(\zeta,\zeta') \mathcal{X}^{\text{sf}}_{\gamma'}(a,\zeta',\Theta^{(\nu)}) \right| <
\text{const } \sum_{\gamma'}\left|
 f^{\gamma'} \right| e^{(-2\pi R + \delta)|Z_{\gamma'}|}.
\end{equation}
The $\delta$ constant is the same appearing in \eqref{bstest}. This sum is convergent by the tameness condition on the $\Omega(\gamma')$ coefficients, and can be made arbitrarily small if $R$ is big enough. Putting everything together:
\begin{align*}
\sup_{\zeta,\theta} \left| \Theta^{(\nu+1)}_\gamma - \theta_\gamma \right| & =\text{const } \sum_{\gamma'}\left|
 f^{\gamma'}  \right| e^{(-2\pi R + \delta)|Z_{\gamma'}|} + \text{Res$(\zeta)$}\\
& < \frac{\epsilon}{2} + \frac{\epsilon}{2} = \epsilon.
\end{align*}
Therefore $\left\| \Theta^{(\nu+1)} - \theta \right\| < \epsilon$. In particular, $\left\| \Theta^{(\nu+1)} \right\| < \infty$, so $\Theta^{(\nu+1)} \in \mathrsfs{X}^*$. Since $\epsilon$ was arbitrary, $\Theta^{(\nu+1)}$ satisfies the side condition \eqref{condex} and thus $\Theta^{(\nu)} \in \mathrsfs{X}^*$ for all $\nu$ if $R$ is big enough.
\end{proof}

We finish this subsection with the proof of some saddle-point analysis results used in the previous lemma.

\begin{lemma}\label{saddleresults}
For every $\nu$ consider an integral of the form
\begin{equation}\label{intsadl}
F(\zeta) = \int_{\ell_{\gamma'}} K(\zeta,\zeta') \mathcal{X}^{\text{sf}}_{\gamma'}(a,\zeta',\Theta^{(\nu)})
\end{equation}
Let $\zeta_0 = -e^{i \arg Z_{\gamma'}}$. Then, for $\zeta \neq \zeta_0$, we can estimate the above integral as
\begin{equation}\label{sdrest}
F(\zeta) = -\frac{\zeta_0 + \zeta}{\zeta_0 - \zeta} \exp \left( i \Theta^{(\nu)}(\zeta_0)\right) \frac{1}{\sqrt{R|Z_{\gamma'}|}} e^{-2\pi R |Z_{\gamma'}|} + O\left( \frac{e^{-2\pi R |Z_{\gamma'}|}}{R} \right), \qquad \text{as $R \to \infty$}
\end{equation}
For $\zeta = \zeta_0$,
\begin{equation}\label{zeta0est}
 F(\zeta_0) = O\left( \frac{e^{-2\pi R |Z_{\gamma'}|}}{R} \right), \qquad \text{as $R \to \infty$}
\end{equation}
\end{lemma}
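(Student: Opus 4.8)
The plan is to evaluate $F(\zeta)$ by the classical Laplace/saddle-point method applied to the exponential factor in $\mathcal{X}^{\text{sf}}_{\gamma'}$. First I would substitute the definition \eqref{semiflat} and parametrize the ray $\ell_{\gamma'} = \{\zeta' = -tZ_{\gamma'}(a) : t > 0\}$, so that the exponent becomes
\[
\pi R\left(\frac{Z_{\gamma'}}{\zeta'} + \overline{Z_{\gamma'}}\zeta'\right) = -\pi R|Z_{\gamma'}|\left(\frac{1}{t} + t\right),
\]
using $Z_{\gamma'}/\zeta' = -1/t$ and $\overline{Z_{\gamma'}}\zeta' = -t|Z_{\gamma'}|^2/|Z_{\gamma'}|\cdot$ (more precisely $\overline{Z_{\gamma'}}\zeta' = -t|Z_{\gamma'}|^2$, and after rescaling the modulus factors out cleanly). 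The phase $h(t) = t + 1/t$ has a unique minimum at $t = 1$, i.e.\ at $\zeta' = -Z_{\gamma'} = |Z_{\gamma'}|\zeta_0$ with $\zeta_0 = -e^{i\arg Z_{\gamma'}}$, with $h(1) = 2$ and $h''(1) = 2$. This yields the leading exponential $e^{-2\pi R|Z_{\gamma'}|}$ and, from the Gaussian integral, the prefactor $1/\sqrt{R|Z_{\gamma'}|}$ up to an explicit constant.

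Next I would track the non-exponential factors. The kernel $K(\zeta,\zeta') = \frac{d\zeta'}{\zeta'}\frac{\zeta'+\zeta}{\zeta'-\zeta}$ and the factor $e^{i\Theta^{(\nu)}_{\gamma'}(\zeta')}$ are slowly varying compared to $e^{-\pi R|Z_{\gamma'}|h(t)}$, so to leading order they are evaluated at the saddle $\zeta' = |Z_{\gamma'}|\zeta_0$. Here I use that $\Theta^{(\nu)}$ extends holomorphically to the relevant sector (as established in Lemma \ref{nubst} and the surrounding discussion), so $e^{i\Theta^{(\nu)}_{\gamma'}}$ is analytic near the saddle and the standard Watson-type asymptotic expansion applies; evaluating $\frac{\zeta'+\zeta}{\zeta'-\zeta}$ at the saddle gives $\frac{\zeta_0+\zeta}{\zeta_0-\zeta}$ after cancelling $|Z_{\gamma'}|$, and the $d\zeta'/\zeta'$ contributes the Jacobian $dt/t$ which at $t=1$ is just $dt$. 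Collecting the constants from $\int e^{-\pi R|Z_{\gamma'}|(t-1)^2}\,dt = \sqrt{1/(R|Z_{\gamma'}|)}$ and the orientation/sign of the contour produces the stated formula \eqref{sdrest}, with the error term $O(e^{-2\pi R|Z_{\gamma'}|}/R)$ coming from the next order in the asymptotic expansion (the $1/\sqrt R$ correction to $1/\sqrt R$, i.e.\ a full power of $1/R$ relative to the exponential).

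For the case $\zeta = \zeta_0$, the factor $\frac{\zeta'+\zeta}{\zeta'-\zeta}$ develops a pole exactly at the saddle, so the naive leading term is singular and must be handled separately. The key observation is that when $\zeta = \zeta_0$ lies \emph{on} the contour $\ell_{\gamma'}$ (after accounting for the $|Z_{\gamma'}|$ scaling, $\zeta_0$ is the unit vector along the ray), the integral is a principal-value/boundary integral, and the simple pole of $\frac{\zeta'+\zeta}{\zeta'-\zeta}$ at $\zeta' = \zeta$ coincides with the point where $h$ is minimized. One shows that the leading $1/\sqrt R$ contribution vanishes: expanding $\frac{\zeta'+\zeta_0}{\zeta'-\zeta_0} \sim \frac{2\zeta_0}{\zeta'-\zeta_0}$ and integrating against the Gaussian $e^{-\pi R|Z_{\gamma'}|(t-1)^2}$ over the symmetric range, the odd singularity integrates to zero in the principal-value sense, leaving only the $O(1/R)$ term. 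I expect this degenerate case to be the main obstacle, since one must justify interchanging the principal-value limit with the saddle-point expansion and verify that the would-be leading term genuinely cancels rather than producing a $\log R$ or a finite residue; the cleanest route is probably to deform slightly off the pole, apply the standard saddle estimate to the regular part, and separately compute the half-residue contribution, showing it is absorbed into the $O(e^{-2\pi R|Z_{\gamma'}|}/R)$ bound.
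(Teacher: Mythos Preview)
Your treatment of the case $\zeta \neq \zeta_0$ is essentially the paper's argument: write the integral as $\int g(\zeta')e^{\pi R f(\zeta')}$ with $f(\zeta')=Z_{\gamma'}/\zeta'+\overline{Z_{\gamma'}}\zeta'$ and apply steepest descent along $\ell_{\gamma'}$. One computational slip: with your parametrization $\zeta'=-tZ_{\gamma'}$ the exponent is $\pi R(-1/t - t|Z_{\gamma'}|^2)$, not $-\pi R|Z_{\gamma'}|(t+1/t)$, so the saddle sits at $t=1/|Z_{\gamma'}|$, i.e.\ at $\zeta'=\zeta_0=-e^{i\arg Z_{\gamma'}}$ on the \emph{unit circle}, not at $\zeta'=-Z_{\gamma'}$. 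There is no ``cancelling $|Z_{\gamma'}|$'' in $\frac{\zeta'+\zeta}{\zeta'-\zeta}$; the kernel is simply evaluated at $\zeta_0$.

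For $\zeta=\zeta_0$ your plan diverges from the paper and has a gap. The paper also indents the contour with a small semicircle, but instead of invoking approximate Gaussian oddness it uses the \emph{exact} symmetry $\zeta'\leftrightarrow 1/\zeta'$ on the ray (equivalently $t\mapsto -t$ in the parametrization $\zeta'=-e^{t}\zeta_0$): the exponent $e^t+e^{-t}$ is exactly even, the kernel is odd, and symmetrizing produces the factor $e^{i\Theta^{(\nu)}(\zeta')}-e^{i\Theta^{(\nu)}(1/\zeta')}$, which vanishes at $\zeta'=\zeta_0$ since $\zeta_0=1/\zeta_0$, rendering the pole removable so that the ordinary saddle estimate applies. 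Your oddness-against-the-Gaussian argument can be made to work but needs the Taylor expansion of $e^{i\Theta^{(\nu)}}$ around the saddle, not just the bare kernel. More importantly, your claim that the half-residue is ``absorbed into the $O(e^{-2\pi R|Z_{\gamma'}|}/R)$ bound'' is false: the half-residue equals $2\pi i\, e^{i\Theta^{(\nu)}(\zeta_0)}e^{-2\pi R|Z_{\gamma'}|}$, which is $O(e^{-2\pi R|Z_{\gamma'}|})$ with no additional $1/R$. (The paper's own argument likewise yields only this weaker bound; fortunately $O(e^{-2\pi R|Z_{\gamma'}|})$ is all that Lemma~\ref{nubst} actually uses.)
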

\begin{proof}
Equation \eqref{intsadl} is of the type
\begin{equation}\label{stmeth}
h(R) = \int_{\ell_{\gamma'}} g(\zeta') e^{\pi R f(\zeta')}
\end{equation}
where
\[ g(\zeta') = \frac{\zeta'+\zeta}{\zeta'(\zeta'-\zeta)}, \hspace{5 mm} f(\zeta') = \frac{Z_{\gamma'}}{\zeta'} 
+ \zeta' \overline{Z_{\gamma'}}. \]
The function $f$ has a saddle point $\zeta_0 = -e^{i \arg Z_{\gamma'}}$ at the intersection of the ray $\ell_{\gamma'}$ with the unit circle. Moreover, $f(\zeta_0) = -2|Z_{\gamma'}|$. The ray $\ell_{\gamma'}$ and the unit circle are the locus of $\text{Im } f(\zeta') = \text{Im }f(\zeta_0) = 0$. It's easy to see that in $\ell_{\gamma'}$ $f(\zeta') < f(\zeta_0)$ if $\zeta' \neq \zeta_0$, so $\ell_{\gamma'}$ is the path of steepest descent (see Figure \ref{spana}).

\begin{figure}[htbp]
	\centering
		\includegraphics[width=0.40\textwidth]{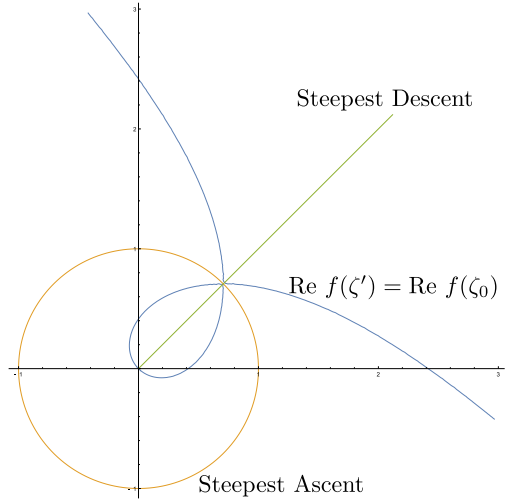}
	\caption{Paths of steepest descent and ascent}
	\label{spana}
\end{figure}

Introduce $\tau$ by

\begin{equation*}
\frac{1}{2} (\zeta' - \zeta_0)^2 f''(\zeta_0) + O((\zeta' -\zeta_0)^3) = -\tau^2
\end{equation*}

and so

\begin{equation}\label{zettau}
\zeta' - \zeta_0 = \left\{ \frac{-2}{f''(\zeta_0)}\right\}^{\frac{1}{2}}\tau + O(\tau^2)
\end{equation}

for an appropriate branch of $\{f''(\zeta_0)\}^{1/2}$. Let $\alpha = \arg f''(\zeta_0) = -2\arg Z_{\gamma'} + \pi$. The branch of $\{f''(\zeta_0)\}^{1/2}$ is chosen so that $\tau > 0$ in the part of the steepest descent path outside the unit disk in Figure \ref{spana}. That is, $\tau > 0$ when $\arg (\zeta' - \zeta_0) = \frac{1}{2}\pi - \frac{1}{2}\alpha$, and so $\{f''(\zeta_0)\}^{1/2} = i\sqrt{2|Z_{\gamma'}|}e^{-i\arg Z_{\gamma'}}$. Thus \eqref{zettau} simplifies to

\begin{equation*}
\zeta' - \zeta_0 = \frac{-\zeta_0}{\sqrt{|Z_{\gamma'}|}}\tau + O(\tau^2)
\end{equation*}

We expand $g(\zeta'(\tau))$ as a power series\footnote{In our case, $g$ depends also on the parameter $R$, so this is an expansion on $\zeta'$}:

\begin{equation}
 g(\zeta'(\tau))  = g(\zeta_0) + g'(\zeta_0) \left\{ \frac{-2}{f''(\zeta_0)} \right\}^{\frac{1}{2}}\tau + O(\tau^2)
\end{equation}

As in \cite{murray}, 

\begin{equation*}
h(R) \sim e^{Rf(\zeta_0)}g(\zeta_0)  \left\{ \frac{-2}{f''(\zeta_0)} \right\}^{\frac{1}{2}} \int_{-\infty}^\infty e^{-R \tau^2} d\tau + \ldots
\end{equation*}
and so
\begin{align*}
h(R) & = \sqrt{\frac{2\pi}{R|f''(\zeta_0)|}} g(\zeta_0) e^{R f(\zeta_0) + (i/2)(\pi - \alpha)} + O\left( \frac{e^{R f(\zeta_0)}}{R} \right) \\
\intertext{in our case, and since $\zeta_0 = -e^{i \arg Z_{\gamma'}}$}
 & = -\frac{\zeta_0 + \zeta}{\zeta_0 - \zeta} \exp \left( i \Theta^{(\nu)}(\zeta_0)\right) \frac{1}{\sqrt{R|Z_{\gamma'}|}} e^{-2\pi R |Z_{\gamma'}|} + O\left( \frac{e^{-2\pi R |Z_{\gamma'}|}}{R} \right), \qquad \text{as $R \to \infty$}
\end{align*}	
This shows \eqref{sdrest}.

%By \eqref{condex}, $\left| \exp \left( i \Theta^{(\nu)}_{\gamma'}(\zeta_0)\right) \right| \leq 2^{\left\| \gamma'\right\|}$.  Thus, for $\zeta$ bounded away from the saddle $\zeta_0$, we can bound the contribution from the integral by
%\begin{equation}\label{festm}
%\text{const } \left| f^{\gamma'}\right| 2^{\left\| \gamma'\right\|} \frac{e^{-2\pi R|Z_{\gamma'}|}}{\sqrt{R |Z_{\gamma'}|}}
%\end{equation}
%if $R$ is big enough. 

If $\zeta \to \zeta_0$, we take a different path of integration, consisting of 3 parts $\ell_1, \ell_2, \ell_3$ (see Figure \ref{newpath}).

\begin{figure}[htbp]
	\centering
		\includegraphics[width=0.30\textwidth]{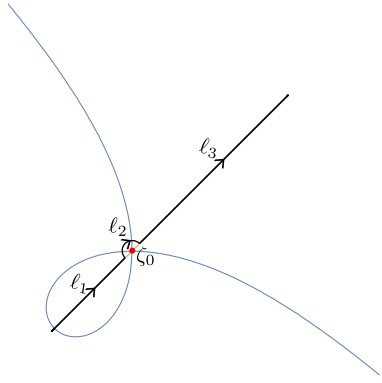}
	\caption{If $\zeta \to \zeta_0$, a modification of the path is required}
	\label{newpath}
\end{figure}

If we parametrize the $\ell_{\gamma'}$ ray as $\zeta' = -e^{t + i\arg Z_{\gamma'}} = - e^t \zeta_0, -\infty < t < \infty$, the $\ell_2$ part is a semicircle around $t = -\epsilon$ and $t = \epsilon$, for small $\epsilon$. The contribution from $\ell_2$ is clearly (up to a factor of $2 \pi i$) half of the residue of the function in \eqref{intsadl}. As in \eqref{resest}, this residue is:
\begin{equation}\label{ressddl}
 2 \pi i \exp \left( i \Theta^{(\nu)}(\zeta_0) -2 \pi R |Z_{\gamma'}|\right).
\end{equation}

If we denote by $\exp \left( i \Theta^{(\nu)}(t) \right)$ the evaluation $\exp \left( i \Theta^{(\nu)}(-t\zeta_0) \right)$, the contributions from $\ell_1$ and $\ell_3$ in the integral are of the form
\begin{align}
\lim_{\epsilon \to 0} \left\{ \vphantom{\int_1^\infty t}\right. & \int_{-\infty}^{-\epsilon} dt \frac{-e^t  + 1}{-e^t  - 1} \exp \left( i \Theta^{(\nu)}(t) \right) \exp \left( \pi R (e^t + e^{-t})\right) \notag\\
& \left. + \int_{\epsilon}^{\infty} dt \frac{-e^t  + 1}{-e^t  - 1} \exp \left( i \Theta^{(\nu)}(t)\right) \exp \left( \pi R (e^t + e^{-t})\right) \right\} \label{2ells}
\end{align}

If we do the change of variables $t \mapsto -t$ in the first integral, \eqref{2ells} simplifies to
\begin{equation}\label{sadend}
\int_0^\infty dt \frac{-e^t  + 1}{-e^t  - 1} \left[ \exp \left( i \Theta^{(\nu)}(t)\right) - \exp \left( i \Theta^{(\nu)}(-t)\right) \right] \exp \left( \pi R (e^t + e^{-t})\right)
\end{equation}

\eqref{sadend} is of the type \eqref{stmeth}, with
\begin{equation*}
g(\zeta') = \frac{\zeta'+\zeta_0}{\zeta'(\zeta'-\zeta_0)} \left[ \exp \left( i \Theta^{(\nu)}(\zeta')\right) - \exp \left( i \Theta^{(\nu)}(1/\zeta')\right) \right]
\end{equation*}

Since $\zeta_0 = 1/\zeta_0$, the apparent pole at $\zeta_0$ of $g(\zeta')$ is removable and the integral can be estimated by the same steepest descent methods as in \eqref{intsadl}. The only difference is that the saddlepoint now lies at one of the endpoints. This only introduces a factor of $1/2$ in the estimates (see \cite{murray}). If $g(\zeta_0) \neq 0$ in this case, the integral is just
\begin{equation}\label{endpest}
\frac{g(\zeta_0)}{2\sqrt{R|Z_{\gamma'}|}} e^{-2\pi R |Z_{\gamma'}|+i \arg Z_{\gamma'}} + O\left( \frac{e^{-2\pi R |Z_{\gamma'}|}}{R} \right)
\end{equation}

If $g(\zeta_0) = 0$, then the estimate is at least of the order $O\left( \frac{e^{-2\pi R |Z_{\gamma'}|}}{R} \right)$. This finishes the proof of \eqref{zeta0est}.
\end{proof}

\subsection{Uniform Estimates on Derivatives}

Now let $\beta = (\beta_1, \beta_{2}, \beta_{3}, \beta_{4})$ be a multi-index in $\mathbb{N}^{4}$, and let $D^\beta$ be a differential operator acting on the iterations $\Theta^{(\nu)}$:
\begin{equation}\label{dbeta}
 D^\beta \Theta^{(\nu)}_\gamma = \frac{\partial}{ \partial \theta_1^{\beta_1} \partial \theta_{2}^{\beta_{2}}\partial a^{\beta_{3}} \partial \overline{a}^{\beta_{4}}} \Theta^{(\nu)}_\gamma
\end{equation}

We need to uniformly bound the partial derivatives of $\Theta^{(\nu)}$ on compact subsets:

\begin{lemma}\label{boundpd}
Let $K$ be a compact subset of $U \times T^2$. Then
\[ \sup_{\cpone \times K} \left\| D^\beta \Theta^{(\nu)} \right\| < C_{\beta,K} \]
for a constant $C_{\beta,K}$ independent of $\nu$.
\end{lemma}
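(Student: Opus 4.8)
The plan is to prove Lemma \ref{boundpd} by induction on $\nu$ and simultaneous induction on $|\beta|$, reproducing the structure of the proof of Lemma \ref{nubst} but differentiating the integral equation \eqref{upsto}. First I would apply $D^\beta$ to the logarithmic form \eqref{logthet}. Because $D^\beta$ only involves $\theta_1,\theta_2,a,\overline a$ and not $\zeta$, it commutes with the contour integrals $\int_r$ and $\int_{-r}$, so the derivative falls on the integrand $\mathcal{X}^{\text{sf}}_{\gamma'}(a,\zeta',\Theta^{(\nu)})$. By the chain rule and the Fa\`a di Bruno formula, $D^\beta\bigl[\mathcal{X}^{\text{sf}}_{\gamma'}(a,\zeta',\Theta^{(\nu)})\bigr]$ is a finite sum of terms, each of the form $\mathcal{X}^{\text{sf}}_{\gamma'}(a,\zeta',\Theta^{(\nu)})$ times a polynomial in the partial derivatives of $\pi R Z_{\gamma'}(a)/\zeta' + i\Theta^{(\nu)}_{\gamma'} + \pi R\zeta'\overline{Z_{\gamma'}(a)}$ of orders $\le|\beta|$. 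The key structural point is that each such term still carries the full exponential factor $\mathcal{X}^{\text{sf}}_{\gamma'}$, which is exactly what made the saddle-point and residue estimates in Lemma \ref{nubst} decay like $e^{(-2\pi R+\delta)|Z_{\gamma'}|}$; the new polynomial prefactors only contribute extra powers of $R$ (from differentiating $\pi R Z_{\gamma'}/\zeta'$ and $\pi R\zeta'\overline{Z_{\gamma'}}$ in $a,\overline a$) and lower-order derivatives $D^{\beta'}\Theta^{(\nu)}$ with $|\beta'|<|\beta|$ (or $=|\beta|$ appearing linearly).

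The key steps, in order, are: (1) set up the double induction, the case $|\beta|=0$ being Lemma \ref{nubst} and the case $\nu=0$ being trivial since $\Theta^{(0)}=\theta$ has $D^\beta\Theta^{(0)}=0$ for $|\beta|\ge 2$ and is bounded for $|\beta|\le 1$; (2) differentiate \eqref{logthet}, expand via the chain rule, and isolate the term where all of $D^\beta$ lands on $i\Theta^{(\nu)}_{\gamma'}$ linearly — call the rest the "lower-order remainder", which by the inductive hypothesis on $|\beta|$ is bounded on $K$ by a constant times a convergent sum $\sum_{\gamma'}|f^{\gamma'}|\,\mathrm{poly}(R)\,e^{(-2\pi R+\delta)|Z_{\gamma'}|}$; (3) rerun the contour-shift-plus-saddle-point estimate of Lemma \ref{nubst} (the residues from crossing $\ell_{\gamma'}$ are controlled exactly as in \eqref{resid}--\eqref{bstest} because the cosine estimate \eqref{cosine} is unchanged, and the integral along $\ell_{\gamma'}$ is estimated by Lemma \ref{saddleresults}) applied now to the integrand $D^\beta[\mathcal{X}^{\text{sf}}_{\gamma'}(\Theta^{(\nu)})]$, using $|D^{\beta'}e^{i\Theta^{(\nu)}_{\gamma'}}|\le C_{\beta',K}$ from the inductive hypothesis; (4) collect: the leading linear term gives a bound of the form $\sum_{\gamma'}|f^{\gamma'}|\,\mathrm{poly}(R)\,e^{(-2\pi R+\delta)|Z_{\gamma'}|}\cdot\sup_{\cpone\times K}|D^\beta\Theta^{(\nu)}_{\gamma'}|$, and choosing $R$ large enough (using the support property \eqref{support} to absorb $|f^{\gamma'}|\,\mathrm{poly}(R)$ into the exponential, as in Lemma \ref{nubst}) makes the coefficient $\le 1/2$, so that $\sup|D^\beta\Theta^{(\nu+1)}|\le \frac12\sup|D^\beta\Theta^{(\nu)}|+(\text{bounded remainder})$, which closes the induction on $\nu$ with a $\nu$-independent bound $C_{\beta,K}$ by the usual geometric-series argument.

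The main obstacle is step (2)--(3): controlling the derivatives $\partial^{\beta_3}_a\partial^{\beta_4}_{\overline a}$ of the \emph{contour-shifted} integrand. Two subtleties arise. First, differentiating in $a$ moves the ray $\ell_{\gamma'}(a)$, so one must either differentiate before shifting the contour (keeping the integral on the fixed ray $r$, where the integrand is only $\mathcal{X}^{\text{sf}}$ times smooth data and the $a$-derivatives are harmless) and only then deform to $\ell_{\gamma'}$, or carefully track the $a$-dependence of the deformation; I would take the former route so that the residue terms \eqref{resid} get differentiated as explicit elementary functions. Second, each $a$- or $\overline a$-derivative of $\mathcal{X}^{\text{sf}}_{\gamma'}$ pulls down a factor $\pi R Z'_{\gamma'}(a)/\zeta'$ or $\pi R\zeta'\overline{Z'_{\gamma'}(a)}$, i.e. a factor growing \emph{linearly in $R$} and, near $\zeta'=\zeta_0$ on the unit circle, bounded by $\mathrm{const}\cdot R|Z_{\gamma'}|$ (using $|Z'_{\gamma'}|\le\mathrm{const}|Z_{\gamma'}|$ on the compact $K$, which follows since $Z$ is a homomorphism to holomorphic functions and $Z_{\gamma'}=\sum n_i Z_{\gamma_i}$ so $Z'_{\gamma'}$ is a fixed linear combination, bounded on $K$ by $\mathrm{const}\,\|\gamma'\|\le\mathrm{const}|Z_{\gamma'}|$ via \eqref{support}); after the saddle-point estimate this turns $e^{-2\pi R|Z_{\gamma'}|}/\sqrt{R|Z_{\gamma'}|}$ into at worst $(R|Z_{\gamma'}|)^{|\beta|}e^{-2\pi R|Z_{\gamma'}|}$, and replacing $e^{-2\pi R|Z_{\gamma'}|}$ by $e^{(-2\pi R+\delta)|Z_{\gamma'}|}$ absorbs the polynomial growth in $|Z_{\gamma'}|$ while the remaining finite power of $R$ is still beaten by the exponential decay once $R$ is large. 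Making this bookkeeping uniform in $\nu$ — i.e. checking that at no stage does the constant depend on $\nu$ — is the delicate part, but it works precisely because the inductive hypothesis supplies $\nu$-independent bounds $C_{\beta',K}$ for all $|\beta'|<|\beta|$ and the self-improving estimate $\frac12$-contracts the top-order term.
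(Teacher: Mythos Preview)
Your proposal is correct and follows essentially the same approach as the paper: double induction on $|\beta|$ and $\nu$, differentiation of \eqref{logthet} expanded via Fa\`a di Bruno, isolation of the single top-order term containing $D^\beta\Theta^{(\nu)}$ linearly, and reuse of the residue/saddle-point estimates from Lemmas \ref{nubst} and \ref{saddleresults} to make the top-order coefficient small while the lower-order remainder is controlled by the constants $C_{\beta',K}$ already obtained. Your contraction formulation $\sup|D^\beta\Theta^{(\nu+1)}|\le \tfrac12\sup|D^\beta\Theta^{(\nu)}|+M$ is logically equivalent to the paper's ``choose $C_{\beta}$ large enough'' phrasing, and your explicit remark that one should differentiate on the fixed ray $r$ before deforming to $\ell_{\gamma'}(a)$ is exactly the implicit order of operations in the paper.
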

\begin{proof}
Lemma \ref{nubst} is the case $|\beta| := \sum \beta_i = 0$, with $\epsilon$ as $C_{0,K}$. To simplify notation, we'll drop the $K$ subindex in these constants. Assume by induction we already did this for $|\beta| = k - 1$ derivatives and for the first $\nu \geq 0$ iterations, the case $\nu = 0$ being trivial. Take partial derivatives with respect to $\theta_s$, for $s = 1, 2$ in \eqref{logthet}. This introduces a factor of the form
\begin{equation}\label{facthet}
i\frac{\partial}{\partial \theta_s} \Theta^{(\nu)}_{\gamma'}
\end{equation}
%By \eqref{bounds}, \eqref{condex} and since no $\gamma'$ appearing in the integrals for $\Theta_\gamma$ is a multiple of $\gamma$, the above can
By induction on $\nu$, the above can be bounded by $\left\| \gamma' \right\| C_{\beta'}$, where $\beta' = (1,0,0,0)$ or $(0,1,0,0)$, depending on the index $s$. When we take the partial derivatives with respect to $a$ in \eqref{logthet}, we add a factor of
\begin{equation}\label{faca}
\frac{\pi R}{\zeta'} \frac{\partial}{\partial a} Z_{\gamma'}(a) + i \frac{\partial}{\partial a} \Theta^{(\nu)}_{\gamma'}
\end{equation}
in the integrals \eqref{logthet}. Similarly, a partial derivative with respect to $\overline{a}$ adds a factor of
\begin{equation}\label{facbara}
 \pi R\zeta' \frac{\partial}{\partial \overline{a}} \overline{Z_{\gamma'}(a)} + i \frac{\partial}{\partial \overline{a}} \Theta^{(\nu)}_{\gamma'}
\end{equation}
As in \eqref{facthet}, the second term in \eqref{faca} and \eqref{facbara} can be bounded by $ \left\| \gamma' \right\| C_{\beta'}$ for $|\beta'| = 1$. Since $Z_{\gamma'}$ is holomorphic on $U \subset \cone$, and since $K \subset U \times T^2$ is compact,
\begin{equation}\label{derzeta}
\left| \frac{\partial^k}{\partial {a^k}} Z_{\gamma'} \right| \leq k! \left\| \gamma' \right\|C
\end{equation}
for all $k$ and some constant $C$, independent of $k$ and $a$. Likewise for $\overline{a}$, $\overline{Z_\gamma'}$. Thus if we take $D^{\beta} \Theta^{(\nu+1)}_\gamma$ in \eqref{logthet} for a multi-index $\beta$ with $|\beta| = k$, the right side of \eqref{logthet} becomes:
\begin{align}
-\frac{1}{4\pi }\left\{ \sum_{\gamma' > 0} f^{\gamma'} \int_r K(\zeta,\zeta') \mathcal{X}^{\text{sf}}_{\gamma'}(a,\zeta',\Theta^{(\nu)})P_{\gamma'}(a,\zeta',\theta) +  \sum_{\gamma' < 0} f^{\gamma'} \int_{-r} K(\zeta,\zeta') \mathcal{X}^{\text{sf}}_{\gamma'}(a,\zeta',\Theta^{(\nu)})Q_{\gamma'}(a,\zeta',\theta) \right\}, \label{allpartia}
\end{align}
where each $P_{\gamma'}$ or $Q_{\gamma'}$ is a polynomial obtained as follows: 

Each $\mathcal{X}^{\text{sf}}_{\gamma'}(a,\zeta',\Theta^{(\nu)})$ is a function of the type $e^g$, for some $g(a,\bar{a},\theta_1, \theta_{2})$. If $\{x_1, \ldots, x_k\}$ denotes a choice of $k$ of the variables $a,\bar{a}, \theta_1, \theta_{2}$ (possibly with multiplicities), then by the Fa\`{a} di Bruno Formula:
\begin{equation}\label{dibruno}
 \frac{\partial^k}{\partial x_1 \cdots \partial x_k} e^g = e^g \sum_{\pi \in \Pi} \prod_{B \in \pi} \frac{\partial^{|B|}g}{\prod_{j \in B}\partial x_j}:= e^g P_{\gamma'}
\end{equation}
where
\begin{itemize}
	\item $\pi$ runs through the set $\Pi$ of all partitions of the set $\{1, \ldots, k\}$.
	\item $B \in \pi$ means the variable $B$ runs through the list of all of the ``blocks'' of the partition $\pi$, and
  \item $|B|$ is the size of the block $B$.

\end{itemize}

 The resulting monomials in $P_{\gamma'}$ (same thing holds for $Q_{\gamma'}$) are products of the variables given by \eqref{facthet}, \eqref{faca}, \eqref{facbara} or their subsequent partial derivatives in $\theta, a, \overline{a}$. For each monomial, the sum of powers and total derivatives of terms must add up to $k$ by \eqref{dibruno}. For instance, when computing
 \[ \frac{\partial^3}{\partial \theta_1 \partial a^2} \mathcal{X}^{\text{sf}}_{\gamma'}(a,\zeta',\Theta^{(\nu)}) = \frac{\partial^3}{\partial \theta_1 \partial a^2} e^g, \]
 a monomial that appears in the expansion is:
 \[ \frac{\partial g}{\partial \theta_1} \left[ \frac{\partial g}{\partial a}\right]^2 = i\frac{\partial}{\partial \theta_1} \Theta^{(\nu)}_{\gamma'} \left[ \frac{\pi R}{\zeta'} \frac{\partial}{\partial a} Z_{\gamma'}(a) + i \frac{\partial}{\partial a} \Theta^{(\nu)}_{\gamma'}\right]^2 \]
 There are a total of (possibly repeated) $B_k$ monomials in $P_{\gamma'}$, where $B_k$ is the Bell number, the total number of partitions of the set $\{1, \ldots, k\}$ and $B_k \leq k!$. We can assume, without loss of generality, that any constant $C_{\beta}$ is considerably larger than any of the $C_{\beta'}$ with $|\beta'| < |\beta|$, by a factor that will be made explicit. First notice that since there is only one partition of $\{1, \ldots, k\}$ consisting of 1 block, the Fa\`{a} di Bruno Formula \eqref{dibruno} shows that $P_{\gamma'}$ contains only one monomial with the factor $D^\beta \Theta^{(\nu)}$. The other monomials have factors $D^{\beta'} \Theta^{(\nu)}$ for $|\beta'| < |\beta|$. We can do a saddle point analysis for each integrand of the form
 \[ \int_r K(\zeta,\zeta') \mathcal{X}^{\text{sf}}_{\gamma'}(a,\zeta',\Theta^{(\nu)})P^i_{\gamma'}(a,\zeta',\theta), \]
 for $P^i_{\gamma'}$ (or $Q^i_{\gamma'}$) one of the monomials of $P_{\gamma'}$ ($Q_{\gamma'}$). The saddle point analysis and the induction step for the previous $\Theta^{(\nu)}$ give the estimate
 \[ C_{\beta} \cdot \text{const} \sum_{\gamma'} \left| \left\langle \gamma, f^{\gamma'} \right\rangle \right| e^{(-2\pi R + \delta)|Z_{\gamma'}|} \]
 for the only monomial with $D^\beta \Theta^{(\nu)}$ on it. The estimates for the other monomials contain the same exponential decay term, along with powers $s$ of $C_{\beta'}, C$ such that $s \cdot |\beta'| \leq |\beta|$, and constant terms. By making $C_{\beta}$ significantly bigger than the previous $C_{\beta'}$, we can bound the entire \eqref{allpartia} by $C_{\beta}$, completing the induction step

\end{proof}

\begin{example}
To see better the estimates we obtained in the previous proof, let's consider the particular case $k = |\beta| = 3$.  If $k = 3$, there are a total of $\binom{4+3-1}{3} = 20$ different third partial derivatives for each $\Theta^{(\nu + 1)}$. There are a total of 5 different partitions of the set $\{1, 2, 3\}$ and correspondingly
\begin{align*}
\frac{\partial^3 }{\partial x_1 \partial x_2 \partial x_3 }e^g & = \\
& e^g \left[ \frac{\partial^3 }{\partial x_1 \partial x_2 \partial x_3}g + \left(  \frac{\partial^2 }{\partial x_1 \partial x_2 }g\right) \left( \frac{\partial}{\partial x_3}g\right) + \left(  \frac{\partial^2 }{\partial x_1 \partial x_3 }g\right) \left( \frac{\partial}{\partial x_2}g\right) \right. \\
& \left. + \left(  \frac{\partial^2 }{\partial x_2 \partial x_3 }g\right) \left( \frac{\partial}{\partial x_1}g\right) + \left( \frac{\partial}{\partial x_1} g \right) \left( \frac{\partial}{\partial x_2} g \right) \left( \frac{\partial}{\partial x_3} g \right) \right]
\end{align*}

If $x_1 = x_2 = x_3 = a$, 
\begin{align*}
\frac{\partial^3}{\partial a^3} \mathcal{X}^{\text{sf}}_{\gamma'}(a,\zeta',\Theta^{(\nu)}) & = \mathcal{X}^{\text{sf}}_{\gamma'}(a,\zeta',\Theta^{(\nu)}) \left[ \frac{\pi R}{\zeta'} \frac{\partial^3}{\partial a^3} Z_{\gamma'} + i \frac{\partial^3}{\partial a^3} \Theta^{(\nu)}_{\gamma'} \right.\\
& + 3 \left(\frac{\pi R}{\zeta'} \frac{\partial^2}{\partial a^2} Z_{\gamma'} + i \frac{\partial^2}{\partial a^2} \Theta^{(\nu)}_{\gamma'} \right)\left( \frac{\pi R}{\zeta'} \frac{\partial}{\partial a} Z_{\gamma'} + i \frac{\partial}{\partial a} \Theta^{(\nu)}_{\gamma'}\right) \\
& \left. + \left( \frac{\pi R}{\zeta'} \frac{\partial}{\partial a} Z_{\gamma'} + i \frac{\partial}{\partial a} \Theta^{(\nu)}_{\gamma'}\right)^3 \right]\\
& = \mathcal{X}^{\text{sf}}_{\gamma'}(a,\zeta',\Theta^{(\nu)})P(\Theta^{(\nu)}_{\gamma'})
\end{align*}

There is one and only one term containing $\frac{\partial^3}{\partial a^3} \Theta^{(\nu)}_{\gamma'}$. By induction on $\nu$, $|\frac{\partial^3}{\partial a^3} \Theta^{(\nu)}_{\gamma'}| < \left\| \gamma' \right\| C_\beta$. For the estimates of 
\[ i f^{\gamma'} \int_r K(\zeta,\zeta') \mathcal{X}^{\text{sf}}_{\gamma'}(a,\zeta',\Theta^{(\nu)})\frac{\partial^3}{\partial a^3} \Theta^{(\nu)}_{\gamma'}, \]
we do exactly the same as in the proof of Lemma \ref{nubst}. Namely, move the ray $r$ to the corresponding BPS ray $\ell_{\gamma'}$, possibly at the expense of gaining a residue bounded by
\begin{equation}\label{cbeta}
C_{\beta} \cdot \text{const} \left|  f^{\gamma'} \right| e^{(-2\pi R + \delta)|Z_{\gamma'}|}
\end{equation}
The sum of all these residues over those $\gamma'$ such that $\left\langle \gamma, \gamma'\right\rangle \neq 0$ is just a fraction of $C_{\beta}$. After moving the contour we estimate
\[ i f^{\gamma'} \int_{\ell_{\gamma'}} K(\zeta,\zeta') \mathcal{X}^{\text{sf}}_{\gamma'}(a,\zeta',\Theta^{(\nu)})\frac{\partial^3}{\partial a^3} \Theta^{(\nu)}_{\gamma'} \]
As in \eqref{spest}, we run a saddle point analysis and obtain a similar estimate \eqref{cbeta} as in Lemma \ref{nubst}. The result is that the estimate for this monomial is an arbitrarily small fraction of $C_\beta$.

If we take other monomials, like say
\[ P^1_{\gamma'} = 3 \left( \frac{\pi R}{\zeta'} \right)^2 \frac{\partial^2}{\partial a^2} Z_{\gamma'} \frac{\partial}{\partial a} Z_{\gamma'} \]
and estimate
\[ 3 f^{\gamma'}\frac{\partial^2}{\partial a^2} Z_{\gamma'} \frac{\partial}{\partial a} Z_{\gamma'} \int_r \left( \frac{\pi R}{\zeta'} \right)^2 K(\zeta,\zeta') \mathcal{X}^{\text{sf}}_{\gamma'}(a,\zeta',\Theta^{(\nu)}), \]
we do as before, computing residues and doing saddle point analysis. The difference with these terms is that partial derivatives of $Z_{\gamma'}$ are bounded by \eqref{derzeta}, and at most second derivatives of $\Theta^{(\nu)}$ (for this specific monomial, there are no such terms) appear. The extra powers of $\frac{\pi R}{\zeta'}$ that appear here don't affect the estimates, since $\mathcal{X}^{\text{sf}}_{\gamma'}$ has exponential decay on $\frac{\pi R}{\zeta'}$. The end result is an estimate of the type
\begin{equation}\label{cbetas}
C_{\beta'_1}^{s_1} \cdots C_{\beta'_m}^{s_m} C^j \cdot \text{const} \left| f^{\gamma'} \right| e^{(-2\pi R + \delta)|Z_{\gamma'}|}
\end{equation}
with all $s_i \cdot |\beta'_i|$ and $j$ $\leq |\beta|$. By induction on $|\beta|$, we can make $C_\beta$ big enough so that \eqref{cbetas} are just a small fraction of $C_\beta$. This completes the illustration of the previous proof for $\beta = (0,0,3,0)$ of the fact that $\sup |D^{\beta} \Theta^{(\nu + 1)}| < C_\beta$ on the compact set $K$.
\end{example}

Now we're ready to prove the main part of Theorem \ref{solut}, that of the existence of solutions to the Riemann-Hilbert problem.
\begin{theorem}\label{contr}
The sequence $\{\Theta^{(\nu)}\}$ converges in $\mathrsfs{X}$. Moreover, its limit $\Theta$ is piecewise holomorphic on $\zeta$ with jumps along the rays $r, -r$ and continuous on the closed half-planes determined by these rays. $\Theta$ is $C^\infty$ on $a, \overline{a}, \theta_1,  \theta_{2}$.
\end{theorem}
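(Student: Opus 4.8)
The plan is to establish the statement in three stages: convergence of the iteration $\{\Theta^{(\nu)}\}$ in $\mathrsfs{X}$, piecewise holomorphy together with boundary continuity in $\zeta$, and $C^\infty$ dependence on $a,\overline a,\theta_1,\theta_2$. For convergence I would show that the map sending $\Theta^{(\nu)}$ to $\Theta^{(\nu+1)}$ via \eqref{upsto} is a contraction of the closed ball $\mathrsfs{X}^*$ once $R$ is large. Writing $\mathcal{X}^{\text{sf}}_{\gamma'}(a,\zeta',\Theta)=e^{i\Theta_{\gamma'}}\exp\big(\pi R(Z_{\gamma'}/\zeta'+\overline{Z_{\gamma'}}\zeta')\big)$ and subtracting \eqref{logthet} for two consecutive iterates, the mean value theorem and the bound $|e^{i\Theta_{\gamma'}}|\le 2^{\|\gamma'\|}$ from \eqref{condex} give
\[ \left| \mathcal{X}^{\text{sf}}_{\gamma'}(a,\zeta',\Theta^{(\nu)}) - \mathcal{X}^{\text{sf}}_{\gamma'}(a,\zeta',\Theta^{(\nu-1)}) \right| \le \text{const}\,\|\gamma'\|\,2^{\|\gamma'\|}\,\big| e^{\pi R(Z_{\gamma'}/\zeta'+\overline{Z_{\gamma'}}\zeta')}\big|\,\big\| \Theta^{(\nu)} - \Theta^{(\nu-1)} \big\|. \]
Feeding this into the difference of the two integral equations, I would re-run verbatim the residue estimate of Lemma \ref{nubst} (deforming $r$ to $\ell_{\gamma'}$) and then the saddle-point estimate of Lemma \ref{saddleresults}, obtaining $\big\| \Theta^{(\nu+1)} - \Theta^{(\nu)} \big\| \le \big( \text{const}\sum_{\gamma'} |f^{\gamma'}|\,\|\gamma'\|\, e^{(-2\pi R + \delta)|Z_{\gamma'}|} \big)\,\big\| \Theta^{(\nu)} - \Theta^{(\nu-1)} \big\|$. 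By the support property \eqref{support} the series converges and the prefactor drops below $1$ for $R$ large, so $\{\Theta^{(\nu)}\}$ is Cauchy in the complete space $\mathrsfs{X}^*$ and converges to the unique fixed point $\Theta$ of the iteration \eqref{upsto}; the corresponding $\mathcal{Y}_k$ is recovered from \eqref{thet}, and the only indeterminacy is the real multiplicative constant already noted in Theorem \ref{solut}.

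For holomorphy and boundary behaviour I would argue as follows. Each $\Theta^{(\nu)}$ is holomorphic on the open half-planes $\mathbb{H}_r$, $\mathbb{H}_{-r}$ (as observed after \eqref{condex}), and the convergence just established is uniform in $\zeta$; hence $\Theta$ is holomorphic on $\mathbb{H}_r\cup\mathbb{H}_{-r}$. The fixed point satisfies \eqref{logthet} with all the $\Theta^{(\nu)}$ replaced by $\Theta$, so off the rays it is $\theta$ plus a sum of Cauchy-type integrals $\int_{\pm r} K(\zeta,\zeta')\,\mathcal{X}^{\text{sf}}_{\gamma'}(a,\zeta',\Theta)$ whose densities are, thanks to the smoothness of $\Theta$ on $U\times T^2$ and its holomorphy in $\zeta'$, H\"older along $r$ and $-r$ and exponentially small at $0$ and $\infty$. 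The Sokhotski--Plemelj formulas then produce continuous (indeed H\"older) one-sided limits on the two rays, so $\Theta$ extends continuously to each closed half-plane with the prescribed jump \eqref{newjumps} across $r$ and $-r$.

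For the $C^\infty$ statement I would invoke Lemma \ref{boundpd}: on each compact $K\subset U\times T^2$ one has $\|D^\beta\Theta^{(\nu)}\|_{\cpone\times K}\le C_{\beta,K}$ for every multi-index $\beta$, uniformly in $\nu$. Using this with $|\beta|=k+1$ makes each family $\{D^\beta\Theta^{(\nu)}\}_\nu$ with $|\beta|=k$ equi-Lipschitz, hence equicontinuous, on $K$; Arzel\`a--Ascoli together with a diagonal argument over an exhaustion of $U\times T^2$ by compacts and over all $\beta$ yields a subsequence along which $D^\beta\Theta^{(\nu)}$ converges locally uniformly for every $\beta$. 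Since $\Theta^{(\nu)}\to\Theta$ already, the elementary theorem on differentiating locally uniform limits, applied inductively on $|\beta|$, identifies the limit of $D^\beta\Theta^{(\nu)}$ with $D^\beta\Theta$; the limit being independent of the subsequence, the full sequence converges and $\Theta$ is $C^\infty$ in $a,\overline a,\theta_1,\theta_2$, with $\|D^\beta\Theta\|_{\cpone\times K}\le C_{\beta,K}$.

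The hard part will be the contraction estimate: it forces one to repeat the entire residue-plus-saddle-point machinery of Lemmas \ref{nubst} and \ref{saddleresults} for the \emph{difference} of two consecutive iterates, controlling $\mathcal{X}^{\text{sf}}_{\gamma'}(\Theta^{(\nu)})-\mathcal{X}^{\text{sf}}_{\gamma'}(\Theta^{(\nu-1)})$ by $\|\Theta^{(\nu)}-\Theta^{(\nu-1)}\|$ while keeping both the $e^{-2\pi R|Z_{\gamma'}|}$ decay and the summability over $\gamma'$, and then verifying that the resulting Lipschitz constant can be made uniformly $<1$ in all remaining variables. A secondary subtlety is the boundary-continuity claim, where the Plemelj analysis has to be reconciled with the contour deformations to the rays $\ell_{\gamma'}$ used in Lemma \ref{nubst} to tame the residues.
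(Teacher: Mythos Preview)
Your proposal is correct and follows essentially the same route as the paper: a contraction argument via the local Lipschitz property of $e^{i(\cdot)}$ combined with the residue and saddle-point estimates of Lemmas~\ref{nubst} and~\ref{saddleresults}, holomorphy read off directly from the integral equation, and $C^\infty$ regularity via Lemma~\ref{boundpd} plus Arzel\`a--Ascoli. You are in fact a bit more careful than the paper on the boundary-continuity point (invoking Sokhotski--Plemelj explicitly) and on the diagonal/subsequence argument, but the architecture is the same.
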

\begin{proof}
We first show the contraction of the $\Theta^{(\nu)}$ in the Banach space $\mathrsfs{X}$ thus proving convergence. We will use the fact that $e^x$ is locally Lipschitz and the $\Theta^{(\nu)}$ are arbitrarily close to $\theta$ if $R$ is big. In particular,
\begin{equation*}
\sup_{\zeta,\theta,a} \left| e^{i\Theta_{\gamma}^{(\nu)}} - e^{i\Theta_{\gamma}^{(\nu-1)}}\right| < \text{const} \cdot \sup_{\zeta,\theta,a} \left| \Theta_{\gamma}^{(\nu)} - \Theta_{\gamma}^{(\nu-1)}\right| \leq  \text{const} \left\| \Theta^{(\nu)} - \Theta^{(\nu-1)}\right\|,
\end{equation*}
for $\gamma$ one of the basis elements $\gamma_1,  \gamma_{2}$. For arbitrary $\gamma'$, recall that if $\gamma' =
 c_1 \gamma_1 + c_2 \gamma_{2}$, then $\Theta_{\gamma'}^{(\nu)} = c_1 \Theta_{\gamma_1}^{(\nu)} +  
c_{2} \Theta_{\gamma_{2}}^{(\nu)}$. It follows from the last inequality that
\begin{equation}\label{lips}
\sup_{\zeta,\theta} \left| e^{i\Theta_{\gamma'}^{(\nu)}} - e^{i\Theta_{\gamma'}^{(\nu-1)}} \right| < \text{const}^{\left\| \gamma' \right\|} \left\| \Theta^{(\nu)} - \Theta^{(\nu-1)}\right\|
\end{equation}

\noindent We estimate
\begin{align*}
\left\| \Theta^{(\nu+1)} - \Theta^{(\nu)}\right\|  = & \frac{1}{4\pi} \left\| \sum_{\gamma'>0} f^{\gamma'}\int_r K(\zeta,\zeta') \left[ \mathcal{X}^{\text{sf}}_{\gamma'}(a,\zeta',\Theta^{(\nu)}) -
 \mathcal{X}^{\text{sf}}_{\gamma'}(a,\zeta',\Theta^{(\nu-1)}) \right]\right.\\
 & \left. + \sum_{\gamma'<0} f^{\gamma'}\int_{-r} K(\zeta,\zeta') \left[ \mathcal{X}^{\text{sf}}_{\gamma'}(a,\zeta',\Theta^{(\nu)})-\mathcal{X}^{\text{sf}}_{\gamma'}(a,\zeta',\Theta^{(\nu-1)}  \right] \right\| \\
 & \leq  \frac{1}{4\pi } \left\| \sum_{\gamma'>0} f^{\gamma'}\int_r K(\zeta,\zeta') \left| \mathcal{X}^{\text{sf}}_{\gamma'}(a,\zeta',\theta)\right|\left| e^{i\Theta_{\gamma'}^{(\nu)}} -  e^{i\Theta_{\gamma'}^{(\nu-1)}}\right| \right\|\\
  & + \frac{1}{4\pi } \left\| \sum_{\gamma'<0} f^{\gamma'}\int_r K(\zeta,\zeta') \left| \mathcal{X}^{\text{sf}}_{\gamma'}(a,\zeta',\theta)\right|  \left| e^{i\Theta_{\gamma'}^{(\nu)}} -  e^{i\Theta_{\gamma'}^{(\nu-1)}}\right| \right\| 
\end{align*}
As in the proof of Lemma \ref{nubst}, we can move the integrals to the rays $\ell_{\gamma'}$ introducing an arbitrary small contribution from the residues. The differences of the form
\[ \left| e^{i\Theta_{\gamma'}^{(\nu)}} -  e^{i\Theta_{\gamma'}^{(\nu-1)}}\right| \]
can be expressed in terms of $\left\| \Theta^{(\nu)} - \Theta^{(\nu-1)}\right\|$ by \eqref{lips}.

The sum of the resulting integrals can be made arbitrarily small if $R$ is big by a saddle point analysis as from \eqref{stmeth} onwards. By \eqref{lips}:
\begin{align*}
\left\| \Theta^{(\nu+1)} - \Theta^{(\nu)}\right\|  & < \text{const} \left\| \sum_{\gamma'} f^{\gamma'} e^{(-2\pi R + \delta)|Z_{\gamma'}|}\right\| \left\| \Theta^{(\nu)} - \Theta^{(\nu-1)}\right\|,
\end{align*}
By making $R$ big, we get the desired contraction in $\mathrsfs{X}$ and the convergence is proved.

The holomorphic properties of $\Theta$ on $\zeta$ are clear since $\Theta$ solves the integral equation \eqref{ineqs} and the right side of it is piecewise holomorphic, regardless of the integrand.

Finally, by Lemma \ref{boundpd}, $\{D^\beta \Theta^{(\nu)}\}$ is an equicontinuous and uniformly bounded family on compact sets $K$ for any differential operator $D^\beta$ as in \eqref{dbeta}. By Arzela-Ascoli, a subsequence converges uniformly and hence its limit is of type $C^{k}$ for any $k$. Since we just showed that $\Theta^{(\nu)}$ converges, this has to be the limit of any subsequence. Thus such limit $\Theta$ must be of type $C^\infty$ on $U \times T^2$, as claimed.
\end{proof}

By Theorem \ref{contr}, the functions $\mathcal{Y}_k(a, \zeta, \theta) \ := \mathcal{X}^\text{sf}_k(a, \zeta, \Theta), k = 1, 2$ satisfy \eqref{newjumps} and condition \eqref{smoothprop}. It remains to show that the functions also satisfy the reality conditions.

\begin{lemma}\label{realcond}
For $\mathcal{Y}_k(a, \zeta, \theta)$ defined as above and with $\gamma = c_1 \gamma_1 + \gamma_2 \in \Gamma$, we define $\mathcal{Y}_{\gamma} = \mathcal{Y}_1^{c_1} \mathcal{Y}_2^{c_2}$. Then
\[ \overline{\mathcal{Y}_{-\gamma}(-1/\overline{\zeta})} = \mathcal{Y}_\gamma(\zeta) \]
\end{lemma}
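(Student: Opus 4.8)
The plan is to verify the reality condition directly on the iterates $\Theta^{(\nu)}$ and pass to the limit, rather than on the $\mathcal{Y}_k$ themselves. Concretely, I will show by induction on $\nu$ that each $\Theta^{(\nu)}$ satisfies
\[ \overline{\Theta^{(\nu)}_{-\gamma}(-1/\overline{\zeta})} = \Theta^{(\nu)}_{\gamma}(\zeta), \]
which, since $\mathcal{X}^{\text{sf}}_{\gamma}$ manifestly obeys $\overline{\mathcal{X}^{\text{sf}}_{-\gamma}(-1/\overline{\zeta}, \overline{\Theta})} = \mathcal{X}^{\text{sf}}_{\gamma}(\zeta, \Theta)$ by inspection of \eqref{semiflat} (the exponent $\pi R Z_\gamma/\zeta + i\theta_\gamma + \pi R\zeta\overline{Z_\gamma}$ is sent to its conjugate under $\gamma \mapsto -\gamma$, $\zeta \mapsto -1/\overline\zeta$, $\theta \mapsto \overline\theta$), will immediately give the claim for $\mathcal{Y}_\gamma = \mathcal{X}^{\text{sf}}_\gamma(a,\zeta,\Theta)$ upon taking $\nu \to \infty$ using the uniform convergence from Theorem \ref{contr}. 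The base case $\nu = 0$ is trivial since $\Theta^{(0)} = \theta$ is real and $\theta_{-\gamma} = -\theta_\gamma$.

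For the inductive step I would take the defining relation \eqref{logthet} for $\Theta^{(\nu+1)}_{-\gamma}$, replace $\zeta$ by $-1/\overline\zeta$, and conjugate. The key computation is on the kernel: $\overline{K(-1/\overline\zeta, \zeta')} = \overline{\frac{d\zeta'}{\zeta'}\frac{\zeta' - 1/\overline\zeta}{\zeta' + 1/\overline\zeta}}$, and after the change of variables $\zeta' \mapsto -1/\overline{\zeta'}$ (which maps the ray $r$ to $-r$ and reverses orientation, since $r, -r$ are admissible and $-1/\overline{(\cdot)}$ is the antipodal-type involution on $\cpone$ fixing the unit circle) one checks that the measure and the Cauchy factor reassemble into $K(\zeta, \zeta')$ with the correct sign, while the sum over $\gamma' > 0$ is exchanged with the sum over $\gamma' < 0$. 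Combined with $f^{\gamma'}$ transforming correctly under $\gamma \mapsto -\gamma$, $\gamma' \mapsto -\gamma'$ (here $f^{\gamma'} = c_{\gamma'}\langle\gamma,\gamma'\rangle$, and $\langle -\gamma, -\gamma'\rangle = \langle\gamma,\gamma'\rangle$ while $c_{-\gamma'} = c_{\gamma'}$ from the power-series expansion, which one must check against the structure of $S_r$ versus $S_{-r}$), and with the induction hypothesis $\overline{\mathcal{X}^{\text{sf}}_{-\gamma'}(-1/\overline{\zeta'}, \Theta^{(\nu)})} = \mathcal{X}^{\text{sf}}_{\gamma'}(\zeta', \Theta^{(\nu)})$, the right-hand side of the conjugated equation becomes exactly the right-hand side of \eqref{logthet} for $\Theta^{(\nu+1)}_\gamma(\zeta)$.

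The main obstacle I anticipate is bookkeeping the orientation and sign conventions under the involution $\zeta \mapsto -1/\overline\zeta$ acting on the contour: one must confirm that this map sends $\mathbb{H}_r$ to $\mathbb{H}_{-r}$ and matches the analytic-continuation conventions used in \eqref{itery} (continuation to $r$ along $\mathbb{H}_r$ versus to $-r$ along $\mathbb{H}_{-r}$), so that the analytically-continued integrands transform into one another rather than into something involving a jump. A secondary point requiring care is that the involution fixes the unit circle pointwise, so the limiting values $\mathcal{Y}^{\pm}$ from the two sides get swapped; this is consistent precisely because the jump relations \eqref{newjumps} on $r$ and $-r$ are themselves interchanged by the reality structure — a compatibility that can be recorded as a remark but is automatically inherited once the iterate-level identity is established. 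Finally, one notes that the ambiguity "up to a real multiplicative constant" in Theorem \ref{solut} is harmless here since a real constant $c$ satisfies $\overline{c^{-1}} = c^{-1}$, so the reality condition is preserved under the allowed rescaling.
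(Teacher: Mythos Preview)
Your overall strategy---induction on the iterates $\Theta^{(\nu)}$, exploiting the symmetry of the kernel $K(\zeta,\zeta')$ under the involution, and passing to the limit---is exactly what the paper does. The paper's proof is more terse: it parametrizes $r$ as $se^{i\rho}$ and uses the substitution $s\mapsto 1/s$ (which maps $r$ to itself and swaps the roles of $0$ and $\infty$ in the kernel), whereas you use $\zeta'\mapsto -1/\overline{\zeta'}$ (which swaps $r$ with $-r$). Both bookkeepings work and are essentially equivalent.

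There is, however, a sign slip in your stated inductive hypothesis. You claim $\overline{\Theta^{(\nu)}_{-\gamma}(-1/\overline\zeta)} = \Theta^{(\nu)}_\gamma(\zeta)$, but since $\Theta_{-\gamma}=-\Theta_\gamma$, this reads $-\overline{\Theta^{(\nu)}_\gamma(-1/\overline\zeta)}=\Theta^{(\nu)}_\gamma(\zeta)$, and already the base case fails: with $\Theta^{(0)}=\theta$ real you get $-\theta_\gamma=\theta_\gamma$. The correct identity (and the one the paper proves) is
\[
\overline{\Theta^{(\nu)}_k(-1/\overline\zeta)}=\Theta^{(\nu)}_k(\zeta),\qquad k=1,2,
\]
with no sign flip on the lattice index; the $-\gamma$ in the statement for $\mathcal{Y}$ is absorbed entirely by the explicit semiflat exponent, not by $\Theta$. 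Once you correct this, your argument goes through and matches the paper's.
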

\begin{proof}
Ignoring the parameters $a, \theta_1, \theta_2$ for the moment, it suffices to show 
\begin{equation}\label{thetreal}
\overline{\Theta_{k}(-1/\overline{\zeta})} = \Theta_k(\zeta), \qquad k = 1, 2
\end{equation}
We show that this is true for all $\Theta^{(\nu)}$ defined as in \eqref{logthet} by induction on $\nu$. For $\nu = 0$, $\Theta^{(0)} = (\theta_1, \theta_2)$ which are real torus coordinates and independent of $\zeta$, so \eqref{thetreal} is true. 

Assuming \eqref{thetreal} is true for $\nu$, we obtain $\Theta^{(\nu+1)}$ as in \eqref{logthet}. If we write $\zeta$ as $te^{i\varphi}, t > 0$ for some angle $\varphi$, and if we parametrize the admissible ray $r$ as $se^{i\rho}, s > 0$, then \eqref{thetreal} for $\nu+1$ follows by induction and by rewriting the integrals in \eqref{logthet} after the reparametrization $s \to \frac{1}{s}$. An essential part of the proof is the form of the symmetric kernel
\[ K(\zeta, \zeta') = \frac{d\zeta'}{\zeta'} \frac{\zeta' + \zeta}{\zeta' - \zeta} \]
which inverts the roles of $0$ and $\infty$ after the reparametrization.
\end{proof}

To verify the last property of $\mathcal{Y}_k$, we prove

\begin{lemma} 
For $\mathcal{Y}_k(a, \zeta, \theta)$ defined as above
\[ \lim_{\zeta \to 0} \mathcal{Y}_\gamma(\zeta) / \mathcal{X}^{\text{sf}}_\gamma(\zeta) \]
 exists and is real.
\end{lemma}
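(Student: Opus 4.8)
The plan is to rewrite the quotient in terms of the phase functions $\Theta_k$ and then exploit the reality condition. Directly from $\mathcal{Y}_\gamma(a,\zeta,\theta)=\mathcal{X}^{\text{sf}}_\gamma(a,\zeta,\Theta)$ in \eqref{thet} and the explicit formula \eqref{semiflat}, the factors $\exp(\pi R Z_\gamma/\zeta+\pi R\zeta\overline{Z_\gamma})$ cancel in the ratio, so that
\[ \frac{\mathcal{Y}_\gamma(\zeta)}{\mathcal{X}^{\text{sf}}_\gamma(\zeta)}=\exp\bigl(i(\Theta_\gamma(\zeta)-\theta_\gamma)\bigr),\qquad\text{where }\Theta_\gamma:=c_1\Theta_1+c_2\Theta_2\text{ for }\gamma=c_1\gamma_1+c_2\gamma_2. \]
Hence it suffices to show that $L_\gamma:=\lim_{\zeta\to0}\bigl(\Theta_\gamma(\zeta)-\theta_\gamma\bigr)$ exists and is purely imaginary; then the limit in question equals $e^{iL_\gamma}$, a positive real number. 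Since the solution $\Theta$ satisfies \eqref{ineqs}, it satisfies \eqref{logthet} with $\Theta^{(\nu)}=\Theta^{(\nu+1)}=\Theta$, and $\zed$-linearity in $\gamma$ of that identity gives
\[ \Theta_\gamma(\zeta)-\theta_\gamma=-\frac{1}{4\pi}\Bigl\{\sum_{\gamma'>0}f^{\gamma'}\int_r K(\zeta,\zeta')\,\mathcal{X}^{\text{sf}}_{\gamma'}(a,\zeta',\Theta)+\sum_{\gamma'<0}f^{\gamma'}\int_{-r}K(\zeta,\zeta')\,\mathcal{X}^{\text{sf}}_{\gamma'}(a,\zeta',\Theta)\Bigr\}. \]

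Next I would pass to the limit in this formula at $\zeta=0$ and at $\zeta=\infty$. Because $K(\zeta,\zeta')=\frac{d\zeta'}{\zeta'}\,\frac{\zeta'+\zeta}{\zeta'-\zeta}$, one has $K(\zeta,\zeta')\to\frac{d\zeta'}{\zeta'}$ as $\zeta\to0$ and $K(\zeta,\zeta')\to-\frac{d\zeta'}{\zeta'}$ as $\zeta\to\infty$. To move these limits through the integrals and the sums over $\gamma'$ I would use dominated convergence, approaching $0$ (resp.\ $\infty$) along a ray interior to $\mathbb{H}_r$ (resp.\ $\mathbb{H}_{-r}$): this keeps $\zeta$ at a fixed positive angular distance from both $r$ and $-r$, so that $|\zeta'+\zeta|/|\zeta'-\zeta|$ stays bounded uniformly in $\zeta$ and $\zeta'$, while the super-exponential decay $|\mathcal{X}^{\text{sf}}_{\gamma'}(a,\zeta',\Theta)|\le C e^{-\text{const}/|\zeta'|}$ near $\zeta'=0$ and $\le C e^{-\text{const}\,|\zeta'|}$ near $\zeta'=\infty$ on the relevant ray — precisely the bounds already used in Lemma \ref{nubst}, together with $|e^{i\Theta_{\gamma'}}|\le 2^{\|\gamma'\|}$ — supplies an integrable majorant and makes the $\gamma'$-sum converge uniformly. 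Continuity of $\Theta$ on the closed half-planes (Theorem \ref{contr}) then promotes this to a genuine limit as $\zeta\to0$ in $\overline{\mathbb{H}_r}$ and in $\overline{\mathbb{H}_{-r}}$, with the same value $L_\gamma=-\frac{1}{4\pi}\sum_{\gamma'}f^{\gamma'}\int\frac{d\zeta'}{\zeta'}\,\mathcal{X}^{\text{sf}}_{\gamma'}(a,\zeta',\Theta)$ on both sides; the identical computation at $\zeta=\infty$, with the sign flip of the kernel, shows $\Theta_\gamma-\theta_\gamma$ has a limit $L'_\gamma$ there with $L'_\gamma=-L_\gamma$.

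Finally I would invoke the reality condition. By \eqref{thetreal} and $\zed$-linearity in $\gamma$, $\overline{\Theta_\gamma(-1/\overline{\zeta})}=\Theta_\gamma(\zeta)$, and since $\theta_\gamma$ is real this reads $\overline{\Theta_\gamma(-1/\overline{\zeta})-\theta_\gamma}=\Theta_\gamma(\zeta)-\theta_\gamma$. Letting $\zeta\to0$ along an interior ray of $\mathbb{H}_r$ sends $-1/\overline{\zeta}\to\infty$ along an interior ray of $\mathbb{H}_{-r}$, so the left side tends to $\overline{L'_\gamma}$ and the right side to $L_\gamma$; hence $L_\gamma=\overline{L'_\gamma}=\overline{-L_\gamma}=-\overline{L_\gamma}$, that is $\mathrm{Re}\,L_\gamma=0$ and $L_\gamma\in i\rone$. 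Therefore $\lim_{\zeta\to0}\mathcal{Y}_\gamma(\zeta)/\mathcal{X}^{\text{sf}}_\gamma(\zeta)=e^{iL_\gamma}$ exists and is a (positive) real number, as claimed. The one genuinely delicate step is the uniform domination when exchanging limit and integral near the endpoints $0,\infty$ of the contour — in particular controlling $|\zeta'-\zeta|^{-1}$ while both $\zeta$ and $\zeta'$ tend to $0$ — which is why it is convenient to first take the limits along interior rays and then appeal to the boundary continuity from Theorem \ref{contr}; everything else is formal once the quotient is rewritten as $e^{i(\Theta_\gamma-\theta_\gamma)}$.
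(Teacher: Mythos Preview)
Your proof is correct and follows the same strategy as the paper: rewrite the quotient as $e^{i(\Theta_\gamma-\theta_\gamma)}$ and deduce that the limit $\Theta^0_\gamma-\theta_\gamma$ is purely imaginary from the reality condition of Lemma~\ref{realcond}. The paper's argument is a one-liner (``this follows from Lemma~\ref{realcond} by letting $\zeta\to0$''), whereas you supply two ingredients the paper leaves implicit: the existence of the limits at $0$ and $\infty$ via dominated convergence on the integral representation, and the relation $L'_\gamma=-L_\gamma$ coming from the sign flip of the kernel $K(\zeta,\zeta')$ between $\zeta=0$ and $\zeta=\infty$. That second relation is what actually makes the reality condition $\overline{L'_\gamma}=L_\gamma$ yield $L_\gamma\in i\rone$, so your version is in fact the more complete of the two.
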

\begin{proof}
Write $\Theta_k^0$ for $\lim_{\zeta \to 0} \Theta_k$. In a similar way we can define $\Theta_k^\infty$. It suffices to show that $\Theta_k^0 - \theta_k$ is imaginary. This follows from Lemma \ref{realcond} by letting $\zeta \to 0$.
\end{proof}
Observe that this and the reality condition give
\[ \Theta_k^0 = \overline{\Theta_k^\infty} \]

To finish the proof of Theorem \ref{solut}, we apply the classical arguments: given two solutions $\mathcal{Y}_k, \mathcal{Z}_k$ satisfying the conditions of the theorem, the functions $\mathcal{Y}_k \mathcal{Z}^{-1}_k$ are entire functions bounded at $\infty$, so this must be a constant. By the reality condition \ref{asympt}, this constant must be real. This finishes the proof of Theorem \ref{solut}.

\section{Special Cases}\label{special}

In our choice of admissible rays $r, -r$, observe that due to the exponential decay of $\mathcal{X}^\text{sf}_k, k = 1, 2$ along these rays (see \eqref{semiflat}) and the rays $\ell_\gamma$, the jumps $S_\ell$ or $S_r, S_{-r}$ are asymptotic to the identity transformation as $\zeta \to 0$ or $\zeta \to \infty$ along these rays. Thus, one can define a Riemann-Hilbert problem whose contour is a single line composed of the rays $r, -r$, the latter with orientation opposite to the one in the previous section. The jump $S$ along the contour decomposes as $S_r, S_{-r}^{-1}$ in the respective rays and we can proceed as in the previous section with a combined contour.

\subsection{Jump Discontinuities}\label{jumpdiscnt}

In \cite{theory}, we will be dealing with a modification of the Riemann-Hilbert problem solved in \S \ref{solutions}. In particular, that paper deals with the new condition
\begin{enumerate}[label=\textnormal{(\arabic*)}]
\item[(5')]  $Z_{\gamma_2}(0) \neq 0$ for any $a$ in $U$ but $Z_{\gamma_1}$ attains its unique zero at $a = 0$.\label{zerocharge}
\end{enumerate}

Because of this condition, the jumps loose the exponential decay along those rays and they are no longer asymptotic to identity transformations. In fact, in \cite{theory} we show that this causes the jump function $S(\zeta)$ to develop a discontinuity of the first kind along $\zeta = 0$ and $\zeta = \infty$.

In this paper we obtain the necessary theory of scalar boundary-value problems to obtain solutions to this special case of Riemann-Hilbert problems appearing in \cite{theory}. We consider a general scalar boundary value problem consisting in finding a sectionally analytic function $X(\zeta)$ with discontinuities at an oriented line $\ell$ passing through 0. If $X^+(t)$ (resp. $X^-(t)$) denotes the limit from the left-hand (resp. right-hand) side of $\ell$, for $t \in \ell$, they must satisfy the boundary condition
\begin{equation}\label{boundary}
X^+(t) = G(t) X^{-}(t), \qquad t \in \ell
\end{equation}
for a function $G(t)$ that is H\"{o}lder continuous on $\ell$ except for jump discontinuities at 0 and $\infty$. We require a symmetric condition on these singularities: if $\Delta_i$, $i = 0$ or $\infty$ represents the jump of the function $G$ near any of these points,
\[ \Delta_0 = \lim_{t \to 0^+} G(\zeta) - \lim_{t \to 0^-} G(\zeta), \qquad \text{etc.} \]
Then we assume
\begin{equation}\label{symjump}
\Delta_0 = -\Delta_\infty
\end{equation}
Near 0 or $\infty$, we require for the analytic functions $X^+(\zeta), X^-(\zeta)$ to have only one integrable singularity of the form
\begin{equation}\label{estm}
|X^{\pm}(\xi)| < \frac{C}{|\xi|^\eta}, \qquad (0 \leq \eta < 1) 
\end{equation}
For $\xi$ a coordinate of $\cpone$ centered at either 0 or $\infty$. By \eqref{estm}, each function $X^\pm$ is asymptotic to 0 near the other point in the set $\{0, \infty\}$.

\begin{lemma}\label{jumpdisc}
There exists functions $X^+(\zeta), X^-(\zeta)$, analytic on opposite half-planes on $\cone$ determined by the contour $\ell$ and continuous on the closed half-planes such that, along $\ell$, the functions obey \eqref{boundary} and \eqref{estm}, with a H\"{o}lder continuous jump function $G(t)$ satisfying \eqref{symjump}. The functions $X^+(\zeta), X^-(\zeta)$ are unique up to multiplication by a constant.
\end{lemma}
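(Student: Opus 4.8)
The plan is to reduce the boundary-value problem \eqref{boundary} to an additive jump problem by taking logarithms, and then to solve the resulting problem via a Cauchy integral, being careful to absorb the discontinuities of $\log G$ at $0$ and $\infty$ into an explicit power-type factor. First I would record that, since $G$ is H\"older continuous on $\ell$ away from $0$ and $\infty$ and has jump discontinuities of the first kind at those two points, the function $\log G(t)$ is well-defined (on a branch) and H\"older continuous on $\ell \setminus \{0, \infty\}$, with its own jump discontinuities there. The symmetry hypothesis \eqref{symjump} is what makes the two endpoint singularities compatible: writing $\eta$ for the quantity $\frac{1}{2\pi i}\big(\log G(0^+) - \log G(0^-)\big)$ (a suitable determination, normalized so that $0 \le \operatorname{Re}\eta < 1$, which is possible after the usual adjustment by an integer), the relation $\Delta_0 = -\Delta_\infty$ forces the jump of $\log G$ at $\infty$ to be $-2\pi i \eta$ as well, consistently with a single global factor $\zeta^{\eta}$ on $\cpone$. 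So I would set $X_0(\zeta) = \zeta^\eta$, computed with the branch cut along $\ell$, which by construction satisfies $X_0^+(t) = e^{2\pi i \eta} X_0^-(t)$ near $0$ and the reciprocal behavior near $\infty$.

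Next I would pass to the function $\widetilde G(t) = G(t)\, X_0^-(t)/X_0^+(t)$, which is now H\"older continuous and nonvanishing on all of $\ell$ including the two endpoints (the endpoint jumps of $G$ and of $X_0^+/X_0^-$ cancel by the choice of $\eta$), and solve the homogeneous problem $Y^+(t) = \widetilde G(t) Y^-(t)$ by the classical Cauchy-integral formula
\begin{equation*}
Y(\zeta) = \exp\left( \frac{1}{2\pi i} \int_\ell \frac{\log \widetilde G(t)}{t - \zeta}\, dt \right),
\end{equation*}
the integral converging because $\log \widetilde G$ is H\"older and, by \eqref{symjump} again, tends to matching limits at the two ends of $\ell$ so that the Plemelj formulas apply uniformly. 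The Sokhotski--Plemelj theorem then gives $Y^+/Y^- = \widetilde G$ on $\ell$, and $Y$ is analytic and nonzero off $\ell$, continuous up to the closed half-planes, and bounded and nonzero at $0$ and $\infty$. Setting $X(\zeta) = X_0(\zeta) Y(\zeta)$ produces a solution of \eqref{boundary}; the estimate \eqref{estm} holds with the exponent $\eta = \operatorname{Re}\eta \in [0,1)$ coming entirely from the $\zeta^\eta$ factor, since $Y$ is bounded above and below near the endpoints.

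For uniqueness, given two solutions $X$ and $\widehat X$ the ratio $X/\widehat X$ has no jump across $\ell$ (the factor $G$ cancels), hence extends to a function holomorphic on $\cone \setminus \{0\}$ and meromorphic at $0, \infty$; by \eqref{estm} applied to both numerator and denominator the ratio grows slower than any power at each of $0$ and $\infty$, so the singularities are removable and $X/\widehat X$ is an entire bounded function, therefore a constant. The main obstacle I anticipate is the bookkeeping at the endpoints: one must check that the determination of $\eta$ can indeed be chosen with $0 \le \operatorname{Re}\eta < 1$ while remaining consistent with \eqref{symjump} at \emph{both} $0$ and $\infty$ simultaneously, and that after removing $\zeta^\eta$ the modified jump $\widetilde G$ is genuinely H\"older (not merely continuous) across the former discontinuity points, so that the Cauchy integral defining $Y$ has boundary values given by Plemelj and the singularity of $X^\pm$ is exactly of the stated order and no worse. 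This is precisely the point where the hypothesis \eqref{symjump} is essential, and I would isolate it as the key computational lemma underlying the construction.
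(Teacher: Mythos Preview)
Your proposal is correct and follows essentially the same route as the paper: factor out an explicit power-type function determined by the endpoint jumps of $\log G$, reduce to a homogeneous problem with a continuous (H\"older) coefficient on all of $\ell$, solve that by a Cauchy integral, and establish uniqueness by a Liouville-type argument on the ratio of two solutions. The only technical difference is in the choice of the auxiliary factor: you use a single function $X_0(\zeta)=\zeta^\eta$ with branch cut along $\ell$, whereas the paper (following Gakhov) introduces an auxiliary point $\zeta_0\in D^+$, a cut from $\zeta_0$ through $0$ to $\infty$, and two distinct functions $\omega^+(\zeta)=\zeta^{\eta_0}$ and $\omega^-(\zeta)=(\zeta/(\zeta-\zeta_0))^{\eta_0}$, one for each half-plane; the reduced jump becomes $G_1(t)=(t-\zeta_0)^{-\eta_0}G(t)$. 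Both constructions accomplish the same cancellation of the endpoint discontinuities and both rely on \eqref{symjump} in exactly the way you identify, so the arguments are interchangeable.
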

\begin{proof}
We follow \cite{gakhov} for the solution of this exceptional case. As seen above, we only have jump discontinuities at $0$ and $\infty$. For any point $t_0$ in the contour $\ell$, and a function $f$ with discontinuities of the first kind on $\ell$ at $t_0$, we denote by $f(t_0 - 0)$ (resp. $f(t_0+0)$) the left (resp. right) limit of $f$ at $t_0$, according to the given orientation of $\ell$.

Let
\[ \eta_0 = \frac{1}{2\pi i} \log \frac{G(0-0)}{G(0+0)} \]
Similarly, define
\[ \eta_\infty = \frac{1}{2\pi i} \log \frac{G(\infty-0)}{G(\infty+0)} \]
 Since $G$ obeys condition \ref{symjump}, $\eta_0 = - \eta_\infty$. Observe that by definition, $|\eta_0| < 1$, and hence the same is true for $\eta_\infty$.
 
Let $D^+$ be the region in $\cpone$ bounded by $\ell$ with the positive, counterclockwise orientation. Denote by $D^-$ the region where $\ell$ as a boundary has the negative orientation. We look for solutions of the homogeneous boundary problem \eqref{boundary} . To solve this, pick a point $\zeta_0 \in D^+$ and introduce two analytic functions 
\[ (\zeta - \zeta_0)^{\eta_0}, \qquad \zeta^{\eta_0} \]
Make a cut in the $\zeta$-plane from the point $\zeta_0$ to $\infty$ through 0, with the segment of the cut from $\zeta_0$ to 0 wholly in $D^+$. Consider the functions 
\begin{equation*}
\omega^+(\zeta) = \zeta^{\eta_0}, \qquad \omega^- = \left( \frac{\zeta}{\zeta - \zeta_0} \right)^{\eta_0}
\end{equation*}
Due to our choice of cut, $\omega^+$ is analytic in $D^+$ and $\omega^-$ is analytic in $D^-$. Introduce new unknown functions $Y^\pm$ setting
\begin{equation}\label{eta0}
X^\pm (\zeta) = \omega^\pm (\zeta) Y^\pm (\zeta)
\end{equation}
The boundary condition \eqref{boundary} now takes the form
\begin{equation}\label{contb}
Y^+(t) = G_1(t) Y^-(t), \qquad t \in \ell
\end{equation}
where
\begin{equation*}
G_1(t) = \frac{\omega^-(t)}{\omega^+(t)} G(t) = (t - \zeta_0)^{-\eta_0} G(t), \qquad t \in \ell
\end{equation*}
By the monodromy of the function $(\zeta - \zeta_0)^{-\eta_0}$ around 0 and infinity and since $\eta_\infty = -\eta_0$, it follows that $G_1$ is continuous in the entire line $\ell$. Hence, we reduced the problem \eqref{boundary} to a problem \eqref{contb} with continuous coefficient, which can be solved with classical Cauchy integral methods.

By assumption, we seek solutions of \eqref{boundary} with only one integrable singularity i.e. estimates of the form \eqref{estm}. The notion of \textit{index} (winding number) for $G(t)$ in the contour $\ell$ is given by (see \cite{gakhov}) $\varkappa = \left \lfloor{\eta_0}\right \rfloor +  \left \lfloor{\eta_\infty}\right \rfloor  + 1 = 0$, so the usual method of solution of \eqref{contb} as
\[ Y = \exp \left( \frac{1}{2\pi i} \int_\ell K(\zeta',\zeta) \log G_1(\zeta') \right) \]
(for a suitable kernel $K(\zeta',\zeta)$ that makes the integral along $\ell$ convergent) needs no modification. We can also see from \eqref{eta0} that $X^\pm$ has an integrable singularity at 0 (resp. $\infty$) if $\eta_0$ is negative (resp. positive).
 
 We need to show that for different choices of $\zeta_0 \in D^+$ the solutions $X$ only differ by a constant. To see this, by taking logarithms in \eqref{boundary} it suffices to show uniqueness of solutions to the homogeneous additive boundary problem
 \begin{equation}\label{homg}
 \Phi^+(t) - \Phi^-(t) = 0, \qquad t \in \ell
 \end{equation}
 and with the assumption that $\Phi$ vanishes at a point and at the points of discontinuity of $G(\zeta)$, $\Phi^\pm$ satisfies an estimate as in \eqref{estm}. The relation \eqref{homg} indicate that the functions $\Phi^+, \Phi^-$ are analytically extendable through the contour $\ell$ and, consequently, constitute a unified analytic function in the whole plane. This function has, at worst, isolated singularities but according to the estimates \eqref{estm}, these singularities cannot be poles or essential singularities, and hence they can only be branch points. But a single valued function with branch points must have lines of discontinuity, which contradicts the fact that $\Phi^+ = \Phi^-$ is analytic (hence continuous) on the entire plane except possibly at isolated points. Therefore, the problem \eqref{homg} has only the trivial solution.
\end{proof}

\subsection{Zeroes of the boundary function}

Because of condition \ref{zerocharge}, yet another special kind of Riemann-Hilbert problem arises in \cite{theory}. We still want to find a
sectionally analytic function $X(\zeta)$ satisfying the conditions \eqref{boundary} with $G(t)$ having jump discontinuities at $0, \infty$ with
the properties \eqref{symjump} and \eqref{estm}. In this subsection, we allow the case of $G(t)$ having zeroes of integer order on finitely many
points $\alpha_1, \ldots, \alpha_\mu$ along $\ell$. Thus, we consider a Riemann-Hilbert problem of the form
\begin{equation}\label{zerog}
X^+(t) = \prod_{j = 1}^\mu (t - \alpha_j)^{m_j} G_1(t) X^-(t), \qquad t \in \ell
\end{equation}
where $m_j$ are integers and $G_1(t)$ is a non-vanishing function as in \S \ref{jumpdiscnt}, still with discontinuities at 0 and $\infty$ as in \eqref{symjump}. 

\begin{lemma}
For a scalar Riemann-Hilbert problem as in \ref{zerog} and with $G_1(t)$ a non-vanishing function with discontinuities of the first kind at $0$ and $\infty$ obeying \eqref{symjump}, there exist solutions $X^{\pm}(\zeta)$ unique up to multiplication by a constant. At all points $\alpha_j$ as above, both analytic functions $X^+(\zeta), X^-(\zeta)$ are bounded and $X^+$ has a zero of order $m_j$.
\end{lemma}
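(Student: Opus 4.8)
The plan is to reduce the problem \eqref{zerog} to the one already solved in Lemma~\ref{jumpdisc}. Set $P(\zeta) = \prod_{j=1}^{\mu}(\zeta - \alpha_j)^{m_j}$; since the $m_j$ are integers, $P$ is a single-valued rational function on $\cpone$ (a polynomial when all $m_j \geq 1$), so, unlike the factors $\zeta^{\eta_0}$ appearing in Lemma~\ref{jumpdisc}, it requires no branch cut. I would then seek a factorization $X^+(\zeta) = P(\zeta)\,Z^+(\zeta)$ on $D^+$ and $X^-(\zeta) = Z^-(\zeta)$ on $D^-$ --- or, to keep control of the admissible order of growth at $0$ and $\infty$, distribute $P$ between the two regions using a fixed reference point of $D^+$. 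Substituting into \eqref{zerog} cancels $P$ from both sides and leaves precisely the boundary relation $Z^+ = G_1 Z^-$ on $\ell$, with $G_1$ non-vanishing, H\"older continuous away from $\{0,\infty\}$, and obeying \eqref{symjump}; Lemma~\ref{jumpdisc} then supplies $Z^\pm$, analytic on the two half-planes, continuous up to $\ell$, with a single admissible integrable singularity, and unique up to a constant.

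The asserted local behaviour at the $\alpha_j$ is then immediate: $P$ vanishes to exact order $m_j$ at $\alpha_j$, while the Cauchy-integral exponential $Z^+$ is finite and nowhere zero on $\overline{D^+}$, so $X^+ = PZ^+$ is bounded at $\alpha_j$ with a zero of order exactly $m_j$ there, and $X^- = Z^-$ is bounded and non-zero at $\alpha_j$. Next I would verify that the estimate \eqref{estm} at $0$ and $\infty$ is preserved: away from the $\alpha_j$ (in particular near $0$ and $\infty$, since no $\alpha_j$ is $0$ or $\infty$) the factor $P$ is bounded above and below, so the admissible bound on $Z^\pm$ transfers to $X^\pm$. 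The only genuinely new bookkeeping is at $\infty$, where $P$ raises the degree by $\sum_j m_j$: one must check how the index $\varkappa = \lfloor \eta_0 \rfloor + \lfloor \eta_\infty \rfloor + 1$ of Lemma~\ref{jumpdisc} is shifted by the zeros and split $P$ between $D^+$ and $D^-$ accordingly, so that the resulting $X^\pm$ still lie in the admissible class. I expect this growth/index reconciliation at $\infty$ to be the main obstacle, as it is exactly what decides whether the solution space is one-dimensional over the (real) constants.

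Finally, uniqueness up to a constant runs as in Lemma~\ref{jumpdisc}. Given two solutions $X^\pm$ and $\widehat{X}^\pm$, the ratios $X^+/\widehat{X}^+$ and $X^-/\widehat{X}^-$ carry the same multiplicative jump $\prod_j(t-\alpha_j)^{m_j}G_1(t)$ across $\ell$, hence agree on $\ell$ and define a single function on $\cpone$ whose only candidate singularities are $0$, $\infty$ and the $\alpha_j$. By \eqref{estm} and the matching orders of vanishing at the $\alpha_j$, none of these can be a pole or an essential singularity; a single-valued function with only branch points would have lines of discontinuity, which is impossible here, so every such point is removable and the ratio is constant. The reality condition then forces this constant to be real, exactly as in \S\ref{solutions}; after passing to logarithms this is the uniqueness statement \eqref{homg} already proved.
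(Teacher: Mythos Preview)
Your approach is essentially the paper's: factor out $P(\zeta)=\prod_j(\zeta-\alpha_j)^{m_j}$ on the $D^+$ side, reduce to the non-vanishing jump $G_1$ handled by Lemma~\ref{jumpdisc}, and run the Liouville/analytic-continuation argument for uniqueness. Two small remarks: the paper simply sets $X^+ = P\,Y^+$, $X^- = Y^-$ without redistributing $P$ or tracking the index shift at $\infty$ that you flag as ``the main obstacle'' (so your caution there is well placed but goes beyond what the paper does), and the reality condition you invoke at the end belongs to \S\ref{solutions}, not to this scalar lemma, so you should drop it here.
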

\begin{proof}
By Lemma \ref{jumpdisc}, there exists non-vanishing analytic functions $Y^+(\zeta), Y^-(\zeta)$ on opposite half-planes $D^+, D^-$ determined by $\ell$ and continuous along the boundary such that
\begin{equation*}
G_1(t) = \frac{Y^+(t)}{Y^-(t)}, \qquad t \in \ell
\end{equation*}
We can define 
\begin{align}
X^+(\zeta) & =  \prod_{j = 1}^\mu (\zeta - \alpha_j)^{m_j} Y^+(\zeta) \label{xplus}\\
X^-(\zeta) & =   Y^-(\zeta) \label{xminus}
\end{align}
This clearly satisfies \eqref{zerog} and, since $Y^+$ is non-vanishing on $D^+$, it shows that $X^+$ has a zero of order $m_j$ at $\alpha_j \in \ell$. To show uniqueness of solutions, note that if $X^+, X^-$ are any solutions to the Riemann-Hilbert problem, we can write the boundary condition \eqref{zerog} in the form
\begin{equation*}
\dfrac{X^+(t)}{Y^+(t) \displaystyle \prod_{j = 1}^\mu (t - \alpha_j)^{m_j}} = \frac{X^-(t)}{Y^-(t)}, \qquad t \in \ell
\end{equation*}
The last relation indicates that the functions
\[ \dfrac{X^+(\zeta)}{Y^+(\zeta) \displaystyle \prod_{j = 1}^\mu (\zeta - \alpha_j)^{m_j}}, \frac{X^-(\zeta)}{Y^-(\zeta)}  \]
are analytic in the domains $D^+, D^-$ respectively and they constitute the analytical continuation of each other through the contour $\ell$. The points $\alpha_j$ cannot be singular points of this unified analytic function, since this would contradict the assumption of boundedness of $X^+$ or $X^-$. The behavior of $X^{\pm}$ at $0$ or $\infty$ is that of $Y^\pm$, so by Liouville's Theorem,
\[ \dfrac{X^+(\zeta)}{Y^+(\zeta) \displaystyle \prod_{j = 1}^\mu (\zeta - \alpha_j)^{m_j}} = \frac{X^-(\zeta)}{Y^-(\zeta)} = C \]
for $C$ a constant. This forces $X^+(\zeta), X^-(\zeta)$ to be of the form \eqref{xplus}, \eqref{xminus}.
\end{proof}

%    Bibliographies can be prepared with BibTeX using amsplain,
%    amsalpha, or (for "historical" overviews) natbib style.
\bibliographystyle{amsplain}
\bibliography{diss}        % is inserted.			     %

%\printbibliography
\end{document}